\def\@cite#1#2{[{{\bfseries #1}\if@tempswa , #2\fi}]}
\renewcommand{\section}{%
\@startsection{section}{1}{\z@}
{0.5truecm plus -1ex minus -.2ex}%
{1.0ex plus .2ex}{\bfseries\large}}
\def\@seccntformat#1{\csname the#1\endcsname.\ }
\numberwithin{equation}{section} 
\theoremstyle{theorem}
\newtheorem{thm}{Theorem}[section]
\newtheorem{lem}[thm]{Lemma}
\theoremstyle{definition}
\newtheorem*{prth3.1}{Proof of Theorem~\ref{thm_bdd1}}
\newtheorem*{prth3.3}{Proof of Theorem~\ref{thm_bdd2}}
\newtheorem*{prth4.1}{Proof of Theorem~\ref{thm_bu1}}
\newtheorem*{prth4.2}{Proof of Theorem~\ref{thm_bu2}}
\newcommand{\ep}{\varepsilon}
\newcommand{\pa}{\partial}
\newcommand{\R}{\mathbb{R}}
\newcommand{\N}{\mathbb{N}}
\newcommand{\cl}[1]{{\overline#1}}
\newcommand{\tmax}{T_{\rm max}}
\begin{document}
\footnote[0]{
    2020{\it Mathematics Subject Classification}\/. 
    Primary: 35A01; 
    Secondary: 35B44, 35K59, 92C17.
    }
\footnote[0]{
    {\it Key words and phrases}\/:
    chemotaxis; quasilinear; attraction-repulsion; boundedness; 
    finite-time blow-up.
    }
\begin{center} 
    \Large{{\bf 
    Boundedness and finite-time blow-up in a quasilinear
    parabolic--elliptic--elliptic attraction-repulsion 
    chemotaxis system
    }}%
\end{center}
\vspace{5pt}
\begin{center}
    Yutaro Chiyo, 
    Tomomi Yokota%
      \footnote{Corresponding author.}%
      \footnote{Partially supported by Grant-in-Aid for
      Scientific Research (C), No.\ 21K03278.}
    \footnote[0]{
    E-mail: 
    {\tt ycnewssz@gmail.com}, 
    {\tt yokota@rs.tus.ac.jp}
    }\\
    \vspace{12pt}
    Department of Mathematics, 
    Tokyo University of Science\\
    1-3, Kagurazaka, Shinjuku-ku, 
    Tokyo 162-8601, Japan\\
    \vspace{2pt}
\end{center}
\begin{center}    
    \small \today
\end{center}

\vspace{2pt}
\newenvironment{summary}
{\vspace{.5\baselineskip}\begin{list}{}{%
     \setlength{\baselineskip}{0.85\baselineskip}
     \setlength{\topsep}{0pt}
     \setlength{\leftmargin}{12mm}
     \setlength{\rightmargin}{12mm}
     \setlength{\listparindent}{0mm}
     \setlength{\itemindent}{\listparindent}
     \setlength{\parsep}{0pt}
     \item\relax}}{\end{list}\vspace{.5\baselineskip}}
\begin{summary}
{\footnotesize {\bf Abstract.} 
This paper deals with 
the quasilinear attraction-repulsion 
chemotaxis system
%
\begin{align*}
\begin{cases}
        u_t=\nabla\cdot \big((u+1)^{m-1}\nabla u
            -\chi u(u+1)^{p-2}\nabla v
            +\xi u(u+1)^{q-2}\nabla w\big) 
            +f(u),
          \\[1.05mm]
        0=\Delta v+\alpha u-\beta v,
          \\[1.05mm]
        0=\Delta w+\gamma u-\delta w
\end{cases}
\end{align*}
%
in a bounded domain $\Omega \subset \mathbb{R}^n$ ($n \in \mathbb{N}$) 
with smooth boundary $\partial\Omega$, 
where $m, p, q \in \mathbb{R}$, $\chi, \xi, \alpha, \beta, \gamma, \delta>0$ 
are constants. 
Moreover, it is supposed that the function $f$ satisfies $f(u)\equiv0$ 
in the study of boundedness, 
whereas, when considering blow-up, 
it is assumed that $m>0$ and $f$ is a function of logistic type 
such as $f(u)=\lambda u-\mu u^{\kappa}$ 
with $\lambda \ge 0$, $\mu>0$ and $\kappa>1$ sufficiently close to~$1$, 
in the radially symmetric setting.
In the case that $\xi=0$ and $f(u) \equiv 0$, 
global existence and boundedness 
have been proved under the condition $p<m+\frac2n$.  
Also, in the case that $m=1$, $p=q=2$ and 
$f$ is a function of logistic type, 
finite-time blow-up has been established 
by assuming $\chi\alpha-\xi\gamma>0$. 
This paper classifies boundedness and blow-up 
into the cases $p<q$ and $p>q$ 
without any condition for the sign of $\chi\alpha-\xi\gamma$
and the case $p=q$ with $\chi\alpha-\xi\gamma<0$ or $\chi\alpha-\xi\gamma>0$.
}
\end{summary}
\vspace{10pt}

\newpage

\section{Introduction} \label{Sec1}

\noindent
{\bf Background.} 
Chemotaxis is the property of cells to move 
in a directional manner in response to concentration gradients of 
chemical substances. 
The system of partial differential equations 
describing such the motion of cells 
was introduced by Keller--Segel~\cite{KS-1970}, and 
is called the chemotaxis system.
The system
\begin{align}\label{KS0}
    \begin{cases}
        u_t=\nabla \cdot \big( \nabla u
            -\chi u\nabla v\big),
          \\[1.05mm]
        v_t=\Delta v+\alpha u-\beta v
          \\[1.05mm]
    \end{cases}
\end{align}
is one of many types of the chemotaxis systems and expresses phenomena 
caused by the movement of cells as a response 
to an attractive chemical substance. 
Here the functions $u$ and $v$ idealize the cell density and the concentration of 
the chemoattractant, respectively. 
After the work~\cite{KS-1970}, there have been many extensive studies 
on the chemotaxis systems (see e.g., Osaki--Yagi~\cite{OY-2001}, 
Bellomo et al.\ \cite{BBTW-2015}, Arumugam--Tyagi~\cite{AT}). 
From the point of view of modeling, it is significant to analyze quasilinear systems 
such as the system
\begin{align*}
    \begin{cases}
        u_t=\nabla\cdot \big((u+1)^{m-1}\nabla u
            -\chi u(u+1)^{p-2}\nabla v\big), 
          \\[1.05mm]
        v_t=\Delta v+\alpha u-\beta v, 
          \\[1.05mm]
    \end{cases}
\end{align*}
where $m, p\in \R$. 
This system has been proposed by Painter--Hillen~\cite{PH-2002} 
and has been dealt with by some works
(see e.g., Cie\'slak~\cite{C-2007}, Tao--Winkler~\cite{TW-2012}; 
cf.\ also \cite{IY-2012} 
for the degenerate version of the system). 
In the other direction, in order to describe the quorum sensing effect 
that cells keep away from a repulsive chemical substance, Painter--Hillen~\cite{PH-2002} 
suggested the following attraction-repulsion chemotaxis system which 
was also introduced by Luca et al.\ \cite{LCEM-2003} 
to describe the aggregation of microglial cells in Alzheimer's disease:
\begin{align}\label{AR}
    \begin{cases}
        u_t=\nabla \cdot \big( \nabla u
            -\chi u\nabla v
            +\xi u\nabla w\big),
          \\[1.05mm]
        v_t=\Delta v+\alpha u-\beta v,
          \\[1.05mm]
        w_t=\Delta w+\gamma u-\delta w.
    \end{cases}
\end{align} 
The functions $u$, $v$ and $w$ in \eqref{AR} 
represent the cell density, the concentration of 
the chemoattractant and chemorepellent, respectively. 
The system \eqref{AR} has also been actively studied as detailed in later. 
Here we emphasize that it is meaningful to consider the system \eqref{AR} 
with diffusion, attraction and repulsion terms involving nonlinearities, that is, 
\begin{align}\label{ARquasi}
    \begin{cases}
        u_t=\nabla\cdot \big((u+1)^{m-1}\nabla u
            -\chi u(u+1)^{p-2}\nabla v
            +\xi u(u+1)^{q-2}\nabla w\big), 
          \\[1.05mm]
        v_t=\Delta v+\alpha u-\beta v,
          \\[1.05mm]
        w_t=\Delta w+\gamma u-\delta w.
    \end{cases}
\end{align}
In this paper, previous to a mathematical analysis of \eqref{ARquasi}, 
we will reduce the system to the parabolic--elliptic--elliptic 
version. 
The reduction seems to be reasonable 
because the diffusion of chemical substances 
are faster than that of cells. 
Thus we can approximate the system \eqref{ARquasi} by its parabolic--elliptic--elliptic 
version.%
\vspace{2.5mm}

\newpage
\noindent
{\bf Problem.} 
In this paper, as mentioned above, we consider the quasilinear 
parabolic--elliptic--elliptic attraction-repulsion 
chemotaxis system
%
\vspace{-0.5mm}
\begin{align}\label{prob}
    \begin{cases}
        u_t=\nabla\cdot \big((u+1)^{m-1}\nabla u
            -\chi u(u+1)^{p-2}\nabla v
            +\xi u(u+1)^{q-2}\nabla w\big) 
            +f(u),
          \\[1.05mm]
        0=\Delta v+\alpha u-\beta v,
          \\[1.05mm]
        0=\Delta w+\gamma u-\delta w,
          \\[1.05mm]
         \nabla u \cdot \nu|_{\pa \Omega}
        =\nabla v \cdot \nu|_{\pa \Omega}
        =\nabla w \cdot \nu|_{\pa \Omega}=0,
          \\[1.05mm]
        u(\cdot, 0)=u_0
    \end{cases}
\end{align}
        \vspace{-2mm}
%

\noindent
in a bounded domain $\Omega \subset \R^n$ ($n \in \N$) 
with smooth boundary $\pa\Omega$, 
where $m, p, q \in \R$, $\chi, \xi, \alpha, \beta, \gamma, \delta>0$ 
are constants,  
$\nu$ is the outward normal vector to $\pa\Omega$,  
%
\vspace{-1mm}
\begin{align}\label{u0}
    u_0 \in C^0(\cl{\Omega}),\quad 
    u_0 \ge 0\ {\rm in}\ \cl{\Omega} \quad {\rm and } \quad 
    u_0 \neq 0.
\end{align}
\vspace{-6mm}
%

\noindent
Moreover, we assume that  
\vspace{-1mm}
\begin{itemize}
\item $m \in \R$, $f(u) \equiv 0$ in the consideration of boundedness; 
\vspace{-1mm}
\item $m>0$, $f(u)=\lambda(|x|)u-\mu(|x|)u^\kappa$ ($\kappa \ge 1$) in the study of blow-up, provided that 
\begin{align}
&\Omega=B_R(0) \subset \R^n\ (n \in \N,\ n \ge 3)\ {\rm with}\ R>0, \label{omega}\\[1.5mm]
&\lambda(\cdot),\ \mu(\cdot)\ 
{\rm are\ nonnegative\ and\ continuous\ functions\ on}\ [0, R],\label{lammu}\\[1.5mm]
&\mu(r) \le \mu_1r^a\ {\rm for\ all}\ r \in [0, R]\ {\rm with\ some}\ \mu_1>0\ {\rm and}\ a \ge 0. \label{mupro}
\end{align}
\end{itemize}

\noindent
{\bf Attraction vs.\ repulsion.} 
As to the system \eqref{prob} with $p=q=2$, 
it is known that boundedness and blow-up are classified 
by the sign of $\chi\alpha-\xi\gamma$ (see e.g., Tao--Wang~\cite{TW-2013}). 
Here boundedness 
(including global existence), 
which expresses that 
$\|u(\cdot, t)\|_{L^\infty(\Omega)} \le C$ for all $t>0$ with some $C>0$, 
is interpreted as the diffusion of cells, 
and that finite-time blow-up (blow-up for short), which means that 
$\lim_{t \nearrow T}\|u(\cdot, t)\|_{L^\infty(\Omega)}=\infty$ with some $T \in (0, \infty)$,  
implies the concentration of cells. 
On the other hand, to the best of our knowledge, 
when $p \neq 2$ or $q \neq 2$, 
no results are available for boundedness and blow-up in \eqref{prob}. 
Here the powers $p, q$ determine the strengths of the effects of attraction, 
which promotes blow-up, and repulsion, which induces boundedness. 
Thus we can naturally guess as follows.
\vspace{-1mm}
\begin{align*}
\text{Boundedness and blow-up  
can be classified by the size of the powers $p, q$.}
\end{align*}
\vspace{-6mm}

\noindent
In the following we discuss this expectation. 
As will be explained later, in the case $\xi=0$ in \eqref{prob}  
it is known that boundedness holds in the case
\vspace{-1mm}
\begin{align}\label{condipm}
p<m+\frac2n, 
\end{align} 
\vspace{-5mm}

\noindent
and blow-up occurs in the opposite case. 
In view of the first equation in \eqref{prob}, 
the condition \eqref{condipm} implies that 
the effect of diffusion ``plus $\frac{2}{n}$''
is stronger than the one of attraction. 
\linebreak[4]
In the case $\xi \neq 0$ 
the system \eqref{prob} involves the repulsion term 
which is expected to work in contrast to the attraction term. 
Therefore the question arises whether the repulsion term is useful 
for deriving boundedness, that is, 
\vspace{2mm}
\makeatletter\tagsleft@true\makeatother
\begin{align}\label{Q1}\tag*{({\bf Q1})}
&\qquad\quad\text{{\it when $p<q$, does boundedness in \eqref{prob} hold 
without assuming \eqref{condipm}?}} \notag
\end{align}
\makeatletter\tagsleft@false\makeatother
\vspace{-4mm}

\noindent
In the opposite case $p>q$, we believe that blow-up can be shown 
since the effect of attraction is more dominant than that of repulsion, 
and we raise the following question.%
\vspace{2mm}
\makeatletter\tagsleft@true\makeatother
\begin{align}\label{Q2}\tag*{({\bf Q2})}
&\text{{\it When $p>q$, does blow-up in \eqref{prob} occur?}} \notag
\end{align}
\makeatletter\tagsleft@false\makeatother
\vspace{-4mm}

\noindent
Furthermore, in the case $p=q$, where the effects of 
attraction and repulsion are balanced, 
the following question arises.
\vspace{2mm}
\makeatletter\tagsleft@true\makeatother
\begin{align}\label{Q3}\tag*{({\bf Q3})}
&\text{{\it When $p=q$, are boundedness and blow-up in 
\eqref{prob}}}\\
&\text{{\it classified by the condition for the coefficients in the equations?}} \notag
\end{align}
\makeatletter\tagsleft@false\makeatother
\vspace{-4mm}

\noindent
{\bf Overview of related works.} 
Before giving answers to the above three questions,
we summarize the previous studies related to each case.
\vspace{2.5mm}

We first focus on the reduced system without repulsion term, 
\begin{align}\label{KS}
    \begin{cases}
        u_t=\nabla\cdot \big((u+1)^{m-1}\nabla u
            -\chi u(u+1)^{p-2}\nabla v\big) 
            +f(u),
          \\[1.05mm]
        \tau v_t=\Delta v+\alpha u-\beta v,
    \end{cases}
\end{align}
where $m, p \in \R$, $\chi, \alpha, \beta>0$, $\tau \in \{0,1\}$ are constants and $f$ is a function. 
In the case $\tau=1$, boundedness were shown in 
\cite{ISY-2014, TW-2012, WL-2017, Z-2015}. 
More precisely, 
Tao--Winkler~\cite{TW-2012} derived boundedness when 
$\Omega \subset \R^n$ ($n \in \N$) is a convex domain, $f(u) \equiv0$ and $p<m+\frac2n$ holds; 
after that, the convexity of $\Omega$ was removed by~\cite{ISY-2014}. 
Conversely, finite-time blow-up was obtained under the condition $p>m+\frac2n$ 
(see e.g., Winkler~\cite{W-2013}, Cie\'slak--Stinner~\cite{CS-2012, CS-2014}). 
Besides, in the critical case $p=m+\frac2n$, boundedness and blow-up 
were classified by the condition for initial data 
(\cite{BL-2013, IY-2012-2, LM-2017, M-2017}). 
Also, in the case $f(u) \le \lambda-\mu u^\kappa$ ($\lambda \ge 0$, $\mu>0$, $\kappa>1$), 
global existence of classical solutions was established by Zheng~\cite{Z-2015} under 
the condition that 
$p<\min\{\kappa-1, m+\frac2n\}$, or $p=\kappa$ if $\mu>0$ is sufficiently large. 
On the other hand, in the case $\tau=0$, 
boundedness were proved in 
\cite{LX-2015, SK-2006, WMZ-2014, Z-2015-2}. 
Particularly, in the case $\Omega=\R^n$ ($n \in \N$), Sugiyama--Kunii~\cite{SK-2006} demonstrated boundedness of 
weak solutions in the system \eqref{KS} of a degenerate type. 
Namely, in the literature the authors dealt with the case that 
$f(u) \equiv 0$, $m \ge 1$, $p \ge 2$ and $p<\min\{m+1, m+\frac2n\}$. 
Also, in the case that $p=2$ and $f(u) \le \lambda-\mu u^\kappa$ ($\lambda \ge 0$, $\mu>0$, $\kappa>1$),
boundedness were verified by~Wang et al.\ \cite{WMZ-2014} under the condition that 
$m>2-\frac2n$ if $\kappa \in (1,2)$, or $\mu>\mu^*$ if $\kappa \ge 2$ 
with some $\mu^*>0$. 
In contrast, when $m=1$, $p=2$ and $f(u)=\lambda u-\mu u^{\kappa}$ 
($\lambda \in \R$, $\mu>0$, $\kappa>1$), 
Winkler~\cite{W-2018} established finite-time blow-up; 
after that, the result was extended to the cases $p \in (1,2)$, $p=2$ and $p>1$ 
in~\cite{TY-2020}, \cite{BFL-2021} and \cite{T-2021}, respectively. 
Moreover, some related works for the system \eqref{KS} 
with nonlinear sensitivity can be found in~\cite{D-2018, F-2015, FNY-2015, JY-2019}. 
For instance, when $\tau=1$, $m=1$, $p=2$ and $f(u) \equiv 0$, 
Fujie~\cite{F-2015} showed boundedness in 
\eqref{KS} with sensitivity function $\frac{\chi}{v}$ under the condition 
$0<\chi<\sqrt{\frac{2}{n}}$. 
\vspace{2.5mm}

We next shift our focus to the attraction-repulsion system
\begin{align}\label{ARpee}
    \begin{cases}
        u_t=\nabla \cdot \big(\nabla u
            -\chi u\nabla v
            +\xi u\nabla w\big) 
            +f(u),
          \\[1.05mm]
        0=\Delta v+\alpha u-\beta v,
          \\[1.05mm]
        0=\Delta w+\gamma u-\delta w,
    \end{cases}
\end{align}
where $\chi, \xi, \alpha, \beta, \gamma, \delta>0$. 
In the case $f(u)=\lambda u-\mu u^{\kappa}$ ($\lambda\in\R$, 
$\mu>0$, $\kappa>1$), finite-time blow-up was recently proved 
in~\cite{CMTY} via the method in~\cite{W-2018} when 
$\kappa$ is sufficiently closed to $1$ and $\chi\alpha-\xi\gamma>0$ holds. 
Moreover, some related works deriving boundedness can be found in~\cite{FS-2019, NSY-2021-2, NSY-2021, NY-2020, NY-2020-2}; 
showing finite-time blow-up can be cited in~\cite{L-2021}; 
dealing with nonlinear diffusion and sensitivities can be referred 
in~\cite{CMY-2020, LMLW-2017, LMG-2016}. 
Particularly, in the two-dimensional setting, 
Fujie--Suzuki~\cite{FS-2019} established 
boundedness in 
the fully parabolic version of \eqref{ARpee} 
under the condition that $\beta=\delta$, $\chi\alpha-\xi\gamma>0$ and 
$\|u_0\|_{L^1(\Omega)}<\frac{4\pi}{\chi\alpha-\xi\gamma}$; 
note that the authors relaxed the condition for $u_0$ 
in the radially symmetric setting and removed the condition $\beta=\delta$. 
Also, Nagai--Yamada~\cite{NY-2020-2} obtained 
global existence of solutions under the condition that $\alpha=\gamma=1$, 
$\chi-\xi>0$ and $\|u_0\|_{L^1(\Omega)}=\frac{8\pi}{\chi-\xi}$ 
in the two-dimensional setting; after that, 
the authors demonstrated boundedness of solutions in \cite{NY-2020}. 
On the other hand, in the three-dimensional and radially symmetric settings, 
existence of solutions blowing up in finite time 
to the fully parabolic version of \eqref{ARpee} 
was shown by Lankeit~\cite{L-2021} 
under the conditions that $\chi\alpha-\xi\gamma>0$ and that 
$\|u_0\|_{L^1(\Omega)}=M$ with some $M>0$.%
\vspace{2.5mm}

In summary, the results on boundedness and blow-up in 
the system \eqref{prob} were obtained as follows: 
Boundedness was derived in the case $\xi=0$ 
under the condition $p<m+\frac2n$; 
blow-up was proved under the condition $\chi\alpha-\xi\gamma>0$. 
However, in previous studies, the effect of repulsion  
has not been effectively utilized. 
The purpose of this paper is to establish boundedness and blow-up 
with help of the repulsion term without the above conditions.
\vspace{2.5mm}

\noindent
{\bf Main results.} 
Before introducing our results, we mention the expected answers to 
the questions \ref{Q1}--\ref{Q3}. 
As to the questions \ref{Q1} and \ref{Q2}, 
we can give affirmative answers. 
Also, regard to the question \ref{Q3}, 
we can classify boundedness and blow-up according to 
the sign of $\chi\alpha-\xi\gamma$.
In the following we briefly state the main results which give 
the answers to the questions. 
The precise statements and their proofs will be given in Sections~\ref{Sec3},~\ref{Sec4}. 

\begin{itemize}
\item[({\bf I})] If $p<q$, then, for all initial data, 
                     the system \eqref{prob} possesses 
                     a global bounded classical solution 
                     which is unique (Theorem~\ref{thm_bdd1}).
\item[({\bf I\!I})] If $p=q$ and $\chi\alpha-\xi\gamma<0$, 
                        then, for all initial data, 
                        the system \eqref{prob} admits 
                        a unique global bounded classical solution 
                        (Theorem~\ref{thm_bdd2}).
\item[({\bf I\!I\!I})] If $p>q$, then there exist initial data such that 
                           the corresponding solutions blow up 
                           in finite time in the radial framework 
                           (Theorem~\ref{thm_bu1}).
\item[({\bf I\!V})] If $p=q$ and $\chi\alpha-\xi\gamma>0$,  
                         then there exist initial data such that 
                         the system \eqref{prob} possesses 
                         solutions blow up in finite time in the radial framework 
                         (Theorem~\ref{thm_bu2}).
\end{itemize}

\noindent
{\bf Strategies for proving boundedness and blow-up.}
The strategy for showing boundedness is to establish the differential inequality 
\begin{align}\label{bddgoal}
    \frac{d}{dt}\int_\Omega (u+1)^\sigma
     \le -c_1
     \Big(\int_\Omega (u+1)^\sigma\Big)^{1+\theta_1}
    +c_2
\end{align}
with some $\sigma>n$, $c_1, c_2, \theta_1>0$. 
The key to the derivation of \eqref{bddgoal} is to take advantage of 
the effect of repulsion. 
More precisely, we will estimate positive terms like $\chi\alpha \int_\Omega  u^{\sigma+p-2}$ 
by the negative term $-\xi\gamma \int_\Omega u^{\sigma+q-2}$. 
On the other hand, the cornerstone of the proof of finite-time blow-up is 
the derivation of the differential inequality
\begin{align}\label{bugoal}
\frac{\pa \phi}{\pa t}(s_0, t) \ge c_3s_0^{-\theta_2}\phi^2(s_0,t)
-c_4s_0^{\theta_3},
\end{align}
where $c_3, c_4, \theta_2, \theta_3>0$ are constants. 
Here the moment-type functional $\phi$ is defined as 
$\phi(s_0, t):=\int_0^{s_0}s^{-b}(s_0-s)U(s,t)\,ds$, 
where $U$ is the mass accumulation function given by 
$U(s,t):=\int_0^{s^{\frac1n}} \rho^{n-1}u(\rho, t)\,d\rho$
for $s>0$, $t>0$ and $b \in (0,1)$. 
To derive the inequality \eqref{bugoal} we utilize the attraction term. 
More precisely, the key is to handle a term derived from the repulsion term 
by exploiting the effect of attraction. 
\vspace{2.5mm}

\noindent
{\bf Plan of the paper.} 
This paper is organized as follows. In Section~\ref{Sec2} we collect some preliminary facts about local existence
in \eqref{prob} and a lemma guaranteeing an $L^\infty$-estimate from an 
$L^\sigma$-estimate for $u$ as well as an inequality which will be used later.
Section~\ref{Sec3} is devoted to establishing results on global existence and 
boundedness. 
In Section~\ref{Sec4} we give and prove results on finite-time blow-up. 


\section{Preliminaries} \label{Sec2}

We first give a result on local classical solutions to \eqref{prob}. 
This result can be proved by standard arguments 
based on the contraction mapping principle 
(see e.g., \cite{CW-2008, TW-2012, TW-2007}).
\begin{lem}\label{lem_local}
Let\/ $\Omega \subset \R^n$ $(n \in \N)$ 
be a bounded domain with smooth boundary and 
let $m, p, q \in \R$, $\chi, \xi, \alpha, \beta, \gamma, \delta>0$. 
Assume that $f(u) \equiv 0$ or $f(u)=\lambda u-\mu u^\kappa$ 
$(\kappa \ge 1)$, where $\lambda, \mu \in C^0(\cl{\Omega})$. 
Then for all $u_0$ satisfying the condition \eqref{u0} 
there exists $\tmax \in (0,\infty]$ such that \eqref{prob}
admits a unique classical solution $(u, v, w)$ such that 
\begin{align}\label{class}
\begin{cases}
       u \in C^0(\cl{\Omega} \times [0, \tmax)) \cap 
              C^{2,1}(\cl{\Omega} \times (0, \tmax)), 
     \\
       v, w \in 
        \bigcap_{\vartheta>n}C^0([0, \tmax); W^{1,\vartheta}(\Omega))
        \cap C^{2,1}(\cl{\Omega} \times (0, \tmax)).
\end{cases}
\end{align}
Moreover, 
\begin{align}\label{bc}
     {\it if}\ \tmax<\infty, \quad
     {\it then}\ 
     \lim_{t \nearrow \tmax}\|u(\cdot, t)\|_{L^\infty(\Omega)}=\infty.
\end{align}
Particularly, 
in the case that $f(u)=\lambda(|x|) u-\mu(|x|) u^\kappa$ 
$(\kappa \ge 1)$ and the conditions 
\eqref{omega}, \eqref{lammu} hold, 
if $u_0$ is further assumed to be radially symmetric, 
then there exists $\tmax \in (0,\infty]$ such that 
\eqref{prob} possesses a unique radially symmetric 
classical solution $(u, v, w)$ satisfying \eqref{class} and \eqref{bc}. 
\end{lem}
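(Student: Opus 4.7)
The plan is to adapt the standard Banach fixed-point argument for quasilinear chemotaxis systems, along the lines of \cite{CW-2008, TW-2012, TW-2007}, to our parabolic--elliptic--elliptic setting. Since the equations for $v$ and $w$ are elliptic, they can be solved nonlocally in terms of $u$: for each nonnegative $u \in C^0(\cl{\Omega})$ and every $\vartheta \in (1,\infty)$, the Neumann problems $-\Delta v + \beta v = \alpha u$ and $-\Delta w + \delta w = \gamma u$ admit unique solutions $v = \mathcal{V}[u]$ and $w = \mathcal{W}[u]$ in $W^{2,\vartheta}(\Omega)$ with
$\|v\|_{W^{2,\vartheta}(\Omega)} + \|w\|_{W^{2,\vartheta}(\Omega)} \le C\|u\|_{L^\vartheta(\Omega)}$
by elliptic regularity; Sobolev embedding for $\vartheta>n$ then yields $\nabla v,\nabla w \in C^0(\cl{\Omega})$. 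Substituting these expressions into the first equation of \eqref{prob} reduces the whole problem to a single quasilinear parabolic equation for $u$.

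Next I would set up a contraction on the ball
\begin{equation*}
\mathcal{B}_{T,R} := \bigl\{\hat u \in C^0(\cl{\Omega}\times[0,T]) : \hat u \ge 0,\ \|\hat u - u_0\|_{L^\infty(\cl{\Omega}\times[0,T])} \le R\bigr\},
\end{equation*}
defining $\Phi(\hat u)$ as the unique classical solution of the linearized parabolic problem obtained by freezing $\hat v := \mathcal{V}[\hat u]$ and $\hat w := \mathcal{W}[\hat u]$ in the drift, with Neumann boundary conditions and initial datum $u_0$. Because $\hat u$ is uniformly bounded on $\mathcal{B}_{T,R}$, the coefficient $(u+1)^{m-1}$ stays uniformly positive and smooth in $u$ regardless of the sign of $m-1$, so linear parabolic theory delivers existence, uniqueness and H\"older regularity of $\Phi(\hat u)$. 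Combining continuity of $\mathcal{V},\mathcal{W}$ with smoothness of $s\mapsto s(s+1)^{p-2}$, $s\mapsto s(s+1)^{q-2}$ and $s\mapsto f(s)$ on compact intervals, one chooses $T$ so small that $\Phi$ maps $\mathcal{B}_{T,R}$ into itself and contracts it in the $C^0$-norm; the resulting fixed point $u$, together with $v:=\mathcal{V}[u]$ and $w:=\mathcal{W}[u]$, is the desired local solution, and parabolic bootstrapping together with Schauder estimates for $v,w$ upgrades the regularity to \eqref{class}. The blow-up alternative \eqref{bc} follows from the usual maximal-extension contradiction: if $\sup_{t<\Tmax}\|u(\cdot,t)\|_{L^\infty(\Omega)}$ were finite, restarting the construction at times $t_0$ close to $\Tmax$ with a uniform existence time would extend $u$ past $\Tmax$.

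For the radial statement under \eqref{omega}--\eqref{lammu}, observe that $B_R(0)$, the elliptic operators and the reaction $f(u)=\lambda(|x|)u - \mu(|x|)u^\kappa$ are all invariant under orthogonal transformations of $\R^n$; hence for radial $u_0$, pulling any solution back by such a transformation yields another solution of \eqref{prob} with the same initial datum, and uniqueness forces the two to coincide, so the solution is radially symmetric. The chief technical nuisance is coordinating the output regularity of $\mathcal{V},\mathcal{W}$ with the input required by the quasilinear parabolic solver; taking $\vartheta>n$ so that $\nabla\hat v,\nabla\hat w$ are continuous up to $\partial\Omega$ avoids any loss. Since all remaining ingredients are by now standard in the chemotaxis literature, the detailed verification can be omitted with reference to \cite{CW-2008, TW-2012, TW-2007}.
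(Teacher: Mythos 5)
Your proposal is correct and follows exactly the route the paper itself indicates: the paper gives no written proof of Lemma~\ref{lem_local}, stating only that it follows from standard contraction-mapping arguments as in \cite{CW-2008, TW-2012, TW-2007}, which is precisely the fixed-point scheme (elliptic solves for $v,w$, Banach iteration for $u$, bootstrap, extensibility criterion, and radial symmetry via uniqueness under rotations) that you sketch.
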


We next give the following lemma which provides a strategy 
to prove global existence and boundedness.

\begin{lem}\label{lem_Lsig_Linf}
Let\/ $\Omega \subset \R^n$ $(n \in \N)$ 
be a bounded domain with smooth boundary and 
let $m, p, q \in \R$, $\chi, \xi, \alpha, \beta, \gamma, \delta>0$. 
Assume that $f(u) \equiv 0$ and 
$u_0$ satisfies \eqref{u0}.
Denote by $(u, v, w)$ the local classical solution of \eqref{prob} 
given in Lemma~{\rm \ref{lem_local}} 
and by $\tmax \in (0,\infty]$ its maximal existence time. 
If for some $\sigma>n$,  
\begin{align*}
\sup_{t \in (0, \tmax)}\|u(\cdot, t)\|_{L^\sigma(\Omega)}<\infty, 
\end{align*}
then we have
\begin{align}\label{Linfty}
\sup_{t \in (0, \tmax)}\|u(\cdot, t)\|_{L^\infty(\Omega)}<\infty. 
\end{align}
\end{lem}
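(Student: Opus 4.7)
The plan is to run a Moser-type $L^r$-iteration on the equation for $u$, which is made possible by first upgrading the assumed $L^\sigma$-bound on $u$ to an $L^\infty$-bound on $\nabla v$ and $\nabla w$ via elliptic regularity.

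\textbf{Step 1 (Signal regularity).} Read the two elliptic identities as $-\Delta v+\beta v=\alpha u$ and $-\Delta w+\delta w=\gamma u$ with homogeneous Neumann data. Since $u(\cdot,t)\in L^\sigma(\Omega)$, standard $L^p$-theory for the Neumann Laplacian yields, with a constant $C>0$ independent of $t\in(0,\tmax)$,
\begin{align*}
\|v(\cdot,t)\|_{W^{2,\sigma}(\Omega)}+\|w(\cdot,t)\|_{W^{2,\sigma}(\Omega)}\le C\bigl(1+\|u(\cdot,t)\|_{L^\sigma(\Omega)}\bigr).
\end{align*}
Because $\sigma>n$, the embedding $W^{2,\sigma}(\Omega)\hookrightarrow W^{1,\infty}(\Omega)$ produces some $K>0$ for which $\|\nabla v(\cdot,t)\|_{L^\infty(\Omega)}+\|\nabla w(\cdot,t)\|_{L^\infty(\Omega)}\le K$ throughout $(0,\tmax)$.

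\textbf{Step 2 ($L^r$-testing).} For $r\ge\max\{\sigma,2\}$, I would test the first equation of \eqref{prob} against $(u+1)^{r-1}$. Using $f\equiv 0$ and integrating by parts, this yields
\begin{align*}
\frac{1}{r}\frac{d}{dt}\int_\Omega (u+1)^r+(r-1)\!\int_\Omega (u+1)^{m+r-3}|\nabla u|^2=(r-1)\!\int_\Omega u(u+1)^{p+r-4}\chi\,\nabla u\cdot\nabla v-(r-1)\!\int_\Omega u(u+1)^{q+r-4}\xi\,\nabla u\cdot\nabla w.
\end{align*}
Using $u\le u+1$, the bound $K$ and Young's inequality, the two drift integrals are controlled by half of the diffusion integral plus a multiple of $\int_\Omega (u+1)^{s_r}$ with $s_r:=2\max\{p,q\}+r-m-3$. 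Setting $\psi_r:=(u+1)^{(m+r-1)/2}$, this gives
\begin{align*}
\frac{d}{dt}\int_\Omega (u+1)^r+c_1(r)\,\|\nabla\psi_r\|_{L^2(\Omega)}^2\le C_1(r)\int_\Omega (u+1)^{s_r}.
\end{align*}

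\textbf{Step 3 (Gagliardo--Nirenberg and iteration).} The Gagliardo--Nirenberg inequality interpolates $\|\psi_r\|_{L^{2s_r/(m+r-1)}}$ between $\|\nabla\psi_r\|_{L^2(\Omega)}$ and an $L^\theta$-norm of $\psi_r$ that can be adjusted, through the choice of $\theta$, so as to coincide with an exponent already controlled at the previous iteration step. Plugging this back into the ODI of Step~2 and adding $\int_\Omega (u+1)^r$ on both sides yields, for $y(t):=\int_\Omega (u+1)^r$, a differential inequality of the form $y'(t)+y(t)\le \widetilde{C}(r)$, whence $\sup_{t\in(0,\tmax)}\|u(\cdot,t)+1\|_{L^r(\Omega)}$ is finite. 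Starting from $r_0=\sigma$ and iterating along a geometric sequence such as $r_{k+1}=2r_k+(m-1)$, Alikakos' scheme shows that $\sup_{t\in(0,\tmax)}\|u(\cdot,t)+1\|_{L^{r_k}(\Omega)}^{1/r_k}$ remains bounded as $k\to\infty$, which delivers \eqref{Linfty}.

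\textbf{Main obstacle.} The delicate point is the subcriticality of the Gagliardo--Nirenberg step: at every $r$, the interpolation exponent $\theta$ must simultaneously be admissible (i.e.\ $2s_r/(m+r-1)$ must stay below the embedding exponent coming from $\|\nabla\psi_r\|_{L^2}^2$) and be dominated by an already-controlled $L^\theta$-norm, \emph{and} the prefactor $C_1(r)$ must grow at worst polynomially in $r$ for Alikakos' geometric iteration to close. This is exactly where the hypothesis $\sigma>n$ enters essentially, for it guarantees that the drift loss $2\max\{p,q\}-m-3$ introduced when testing with $(u+1)^{r-1}$ is absorbed rather than amplified at each step.
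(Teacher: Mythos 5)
Your proposal follows the paper's proof in essentially the same way: your Step~1 is exactly the paper's use of \cite[Lemma~2.4~(ii)]{W-2011} to convert the $L^\sigma$-bound with $\sigma>n$ into $L^\infty$-bounds on $\nabla v$ and $\nabla w$, and your Steps~2--3 simply unpack the Moser--Alikakos iteration that the paper imports as a black box via \cite[Lemma~A.1]{TW-2012}. The only (inessential) inaccuracy is in your closing remark: the hypothesis $\sigma>n$ is spent entirely in Step~1 on the gradient bounds, while the closure of the iteration against the constant additive loss $2\max\{p,q\}-m-3$ is precisely what the cited Lemma~A.1 guarantees, so the two arguments coincide in substance.
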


\begin{proof}
By the $L^\sigma$-boundedness of $u$, 
there exist $c_1>0$ and $\sigma>n$ such that 
\begin{align}\label{Lsigma2}
\|u(\cdot,t)\|_{L^\sigma(\Omega)} \le c_1
\end{align}
for all $t \in (0, \tmax)$. 
Since $\sigma>n$, 
applying \cite[Lemma~2.4~(ii) with $\theta=\sigma$ and $\mu=\infty$]{W-2011} along with \eqref{Lsigma2} yields  
\begin{align}
\|\nabla v(\cdot,t)\|_{L^\infty(\Omega)} 
&\le c_2\Big(1+\sup_{t \in (0, \tmax)}\|u(\cdot, t)\|_{L^\sigma(\Omega)}\Big)
\le c_3, \label{nabvest}\\
\|\nabla w(\cdot,t)\|_{L^\infty(\Omega)}
&\le c_4\Big(1+\sup_{t \in (0, \tmax)}\|u(\cdot, t)\|_{L^\sigma(\Omega)}\Big)
\le c_5 \label{nabwest}
\end{align}
for all $t \in (0, \tmax)$ with some $c_2, c_3, c_4, c_5>0$. 
Thanks to \eqref{Lsigma2}--\eqref{nabwest}, 
we can see from \cite[Lemma~A.1]{TW-2012} that \eqref{Linfty} holds. 
\end{proof}

We finally state an inequality which will be used repeatedly.

\begin{lem}\label{lem_some_ineq}
Let $\ell>1$.  Then for all $\ep>0$,
\begin{align}\label{conv}
(x+1)^\ell \le (1+\ep)x^\ell+C_\ep
\quad (x\ge0), 
\end{align}
where $C_\ep:=(1+\ep)\big((1+\ep)^{\frac{1}{\ell-1}}-1\big)^{-(\ell-1)}$.
\end{lem}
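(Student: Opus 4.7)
The plan is to derive the inequality from the convexity of the map $t\mapsto t^\ell$ on $[0,\infty)$, which holds because $\ell>1$, together with an optimal tuning of a single free parameter. In spirit this is just Young's inequality with a carefully chosen splitting, and it will produce the sharp constant stated in the lemma.

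First, for any $\theta\in(0,1)$ I would write
\[
x+1=\theta\cdot\tfrac{x}{\theta}+(1-\theta)\cdot\tfrac{1}{1-\theta}
\]
and apply Jensen's inequality to $t\mapsto t^\ell$ to obtain
\[
(x+1)^\ell\le \theta^{1-\ell}\,x^\ell+(1-\theta)^{1-\ell}\qquad(x\ge 0).
\]
Second, I would choose $\theta$ so that the leading coefficient coincides with $1+\ep$, namely
$\theta=(1+\ep)^{-1/(\ell-1)}$,
which lies in $(0,1)$ precisely because $\ep>0$ and $\ell>1$. A short rearrangement gives $1-\theta=\bigl((1+\ep)^{1/(\ell-1)}-1\bigr)(1+\ep)^{-1/(\ell-1)}$.

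The last step is purely algebraic: raise $1-\theta$ to the power $1-\ell$ and verify that the resulting constant reduces precisely to $C_\ep=(1+\ep)\bigl((1+\ep)^{1/(\ell-1)}-1\bigr)^{-(\ell-1)}$. I do not anticipate any genuine obstacle here, since the exponents telescope immediately. Should one prefer a calculus-based alternative that also explains why $C_\ep$ is sharp, the same constant arises by maximizing $g(x):=(x+1)^\ell-(1+\ep)x^\ell$ on $[0,\infty)$: the equation $g'(x)=0$ forces $x+1=(1+\ep)^{1/(\ell-1)}x$, yielding the unique critical point $x^\ast=\bigl((1+\ep)^{1/(\ell-1)}-1\bigr)^{-1}$, at which a direct substitution gives $g(x^\ast)=C_\ep$, so that $g(x)\le C_\ep$ throughout $[0,\infty)$.
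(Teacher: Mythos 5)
Your proposal is correct and follows essentially the same route as the paper: the paper's proof is exactly your convex-combination argument with $\theta=(1+\ep)^{-1/(\ell-1)}$ substituted from the outset, so the two computations coincide line for line. The additional calculus argument showing that $C_\ep$ is the sharp constant is a correct bonus not present in the paper.
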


\begin{proof}
Owing to the convexity of the function $y \mapsto y^\ell$ on $[1,\infty)$ 
we have
\begin{align*}
  (x+1)^\ell
&=\left[
    \frac{1}{(1+\ep)^{\frac{1}{\ell-1}}}\cdot (1+\ep)^{\frac{1}{\ell-1}}x
    +\left(
         1-\frac{1}{(1+\ep)^{\frac{1}{\ell-1}}}
     \right) \cdot
         \frac{(1+\ep)^{{\frac{1}{\ell-1}}}}{(1+\ep)^{\frac{1}{\ell-1}}-1}
  \right]^\ell
  \\
&\le \frac{1}{(1+\ep)^{\frac{1}{\ell-1}}}\cdot 
     \left[
        (1+\ep)^{\frac{1}{\ell-1}}x
     \right]^\ell
    +\left(
         1-\frac{1}{(1+\ep)^{\frac{1}{\ell-1}}}
     \right) \cdot
     \left[
         \frac{(1+\ep)^{{\frac{1}{\ell-1}}}}{(1+\ep)^{\frac{1}{\ell-1}}-1}
     \right]^\ell
  \\
&=(1+\ep)x^\ell
    +\frac{1+\ep}{\big((1+\ep)^{\frac{1}{\ell-1}}-1\big)^{\ell-1}}, 
\end{align*}
which leads to \eqref{conv}.
\end{proof}


\section{Global existence and boundedness}
\label{Sec3}

In this section we assume that $\Omega \subset \R^n$ ($n \in \N$) is a bounded domain 
with smooth boundary, 
$m, p, q \in \R$, $\chi, \xi, \alpha, \beta, \gamma, \delta>0$, $f(u) \equiv 0$. 
We will prove global existence and boundedness in \eqref{prob} 
in two cases $p<q$ and $p=q$.


\subsection{The case \boldmath{$p<q$}}
\label{Subsec3.1}

In this subsection we show the following theorem 
which asserts global existence and boundedness 
in \eqref{prob} in the case $p<q$.

\begin{thm}\label{thm_bdd1}
Assume that $p<q$. 
Then for all $u_0$ satisfying \eqref{u0} 
there exists a unique triplet $(u, v, w)$ 
of nonnegative functions
\begin{align*}
\begin{cases}
       u \in C^0(\cl{\Omega} \times [0, \infty)) \cap 
              C^{2,1}(\cl{\Omega} \times (0, \infty)), 
     \\
       v, w \in 
        \bigcap_{\vartheta>n}C^0([0, \infty); W^{1,\vartheta}(\Omega))
        \cap C^{2,1}(\cl{\Omega} \times (0, \infty)),
\end{cases}
\end{align*}
which solves \eqref{prob} in the classical sense, 
and is bounded, that is, $\|u(\cdot,t)\|_{L^\infty(\Omega)} \le C$ 
for all $t>0$ with some $C>0$.
\end{thm}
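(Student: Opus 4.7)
The plan is to extend the local solution from Lemma~\ref{lem_local} to a global bounded one by establishing, for some $\sigma > n$, the a priori bound $\sup_{t \in (0, \tmax)}\|u(\cdot, t)\|_{L^\sigma(\Omega)} < \infty$; Lemma~\ref{lem_Lsig_Linf} will then upgrade this to an $L^\infty$-bound, whence the extensibility criterion \eqref{bc} forces $\tmax = \infty$. The concrete route is to derive the differential inequality \eqref{bddgoal} with $Y(t) := \int_\Omega (u+1)^\sigma$ and invoke a standard ODE comparison; uniqueness of $(u,v,w)$ is inherited directly from Lemma~\ref{lem_local}.

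To obtain the energy estimate I would test the first equation of \eqref{prob} against $(u+1)^{\sigma-1}$, integrate over $\Omega$, and exploit the Neumann conditions. Setting
\[
F(u) := \int_0^u s(s+1)^{\sigma+p-4}\,ds \quad \text{and} \quad G(u) := \int_0^u s(s+1)^{\sigma+q-4}\,ds,
\]
a further integration by parts on each of the two convection integrals, combined with the elliptic substitutions $\Delta v = \beta v - \alpha u$ and $\Delta w = \delta w - \gamma u$, produces, after discarding the favorable $-\chi\beta(\sigma-1)\int_\Omega vF(u) \le 0$ (legitimate since $v \ge 0$ by the maximum principle),
\[
\frac{1}{\sigma}\frac{d}{dt}\int_\Omega (u+1)^\sigma + (\sigma-1)\int_\Omega (u+1)^{\sigma+m-3}|\nabla u|^2 \le (\sigma-1)\Bigl[\chi\alpha\int_\Omega uF(u) + \xi\delta\int_\Omega wG(u) - \xi\gamma\int_\Omega uG(u)\Bigr].
\]
Elementary asymptotics give $\int_\Omega uF(u) \le C_1\int_\Omega (u+1)^{\sigma+p-1}$ and $\int_\Omega uG(u) \ge c_1\int_\Omega (u+1)^{\sigma+q-1} - C_2$.

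The crux is the positive coupling term $\xi\delta(\sigma-1)\int_\Omega wG(u)$. Testing the third equation of \eqref{prob} against $w^{r-1}$ and applying H\"older's inequality yields the sharp elliptic estimate $\|w\|_{L^r(\Omega)} \le (\gamma/\delta)\|u\|_{L^r(\Omega)}$ for every $r \ge 1$, and pairing this with H\"older's inequality at the dual exponents $r = \sigma+q-1$ and $r' = (\sigma+q-1)/(\sigma+q-2)$, together with the pointwise estimate $G(u) \le (u+1)^{\sigma+q-2}/(\sigma+q-2)$, gives $\int_\Omega wG(u) \le C_3\int_\Omega (u+1)^{\sigma+q-1}$. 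I expect the hard part to be that the leading coefficient $\xi\delta C_3$ naturally matches the leading coefficient of $\xi\gamma\int_\Omega uG(u)$, so a naive subtraction annihilates the $(u+1)^{\sigma+q-1}$-contribution entirely; the technical work is to invoke Lemma~\ref{lem_some_ineq}---replacing $(u+1)^{\sigma+q-1}$ by $(1+\varepsilon)u^{\sigma+q-1} + C_\varepsilon$---in one of the two bounds in order to salvage a strictly negative multiple of $\int_\Omega (u+1)^{\sigma+q-1}$ at the cost of harmless additive constants.

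With such a net negative term secured, the hypothesis $p < q$ finally enters through Young's inequality $(u+1)^{\sigma+p-1} \le \eta(u+1)^{\sigma+q-1} + C_\eta$, which absorbs the attraction contribution into the available negative term and yields $\frac{d}{dt}\int_\Omega (u+1)^\sigma \le -c_4\int_\Omega (u+1)^{\sigma+q-1} + c_5$. Since H\"older's inequality gives $\int_\Omega (u+1)^{\sigma+q-1} \ge |\Omega|^{-(q-1)/\sigma}\bigl(\int_\Omega (u+1)^\sigma\bigr)^{(\sigma+q-1)/\sigma}$, this rewrites as \eqref{bddgoal} with $\theta_1 = (q-1)/\sigma > 0$. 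An elementary ODE comparison supplies the $L^\sigma$-bound, Lemma~\ref{lem_Lsig_Linf} upgrades it to the desired $L^\infty$-bound, and the proof is complete.
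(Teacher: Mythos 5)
Your overall skeleton (testing with $(u+1)^{\sigma-1}$, substituting the elliptic equations, absorbing the attraction term into the repulsion term via Young's inequality using $p<q$, then passing from $L^\sigma$ to $L^\infty$ via Lemma~\ref{lem_Lsig_Linf} and the criterion \eqref{bc}) matches the paper, but the step you yourself identify as the crux does not work as described, and this is a genuine gap. Your elliptic estimate $\|w\|_{L^r(\Omega)}\le(\gamma/\delta)\|u\|_{L^r(\Omega)}$ is obtained by \emph{discarding} the gradient term $(r-1)\int_\Omega w^{r-2}|\nabla w|^2$, and its constant $\gamma/\delta$ is exactly critical: it gives $\xi\delta\int_\Omega wG(u)\le\frac{\xi\gamma}{\sigma+q-2}\int_\Omega(u+1)^{\sigma+q-1}$, which cancels the negative term $-\xi\gamma\int_\Omega uG(u)\le-\frac{\xi\gamma}{\sigma+q-2}\int_\Omega u^{\sigma+q-1}$ at leading order. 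At that point Lemma~\ref{lem_some_ineq} cannot ``salvage a strictly negative multiple'' of $\int_\Omega(u+1)^{\sigma+q-1}$: the inequality $(x+1)^\ell\le(1+\ep)x^\ell+C_\ep$ only goes one way, so whichever of the two bounds you apply it to, the combined repulsion contribution comes out as $+\,\ep\, c\int_\Omega u^{\sigma+q-1}+C$ (or, with sharper bookkeeping, a positive lower-order remainder), never as $-\,c\int_\Omega u^{\sigma+q-1}$. With no negative superlinear term available, the Young absorption of $\chi\alpha\int_\Omega u^{\sigma+p-1}$ has nothing to be absorbed into, and your claimed inequality $\frac{d}{dt}\int_\Omega(u+1)^\sigma\le-c_4\int_\Omega(u+1)^{\sigma+q-1}+c_5$ does not follow; falling back on the diffusion term instead would reintroduce the restriction $p<m+\frac2n$ that the theorem is designed to avoid.

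The missing idea is the paper's Lemma~\ref{lemsub}: keeping the Dirichlet energy $\frac{4(\ell-1)}{\ell^2}\int_\Omega|\nabla w^{\ell/2}|^2$ when testing the third equation with $w^{\ell-1}$, and combining it via the Gagliardo--Nirenberg inequality with the mass identity $\int_\Omega w=\frac{\gamma}{\delta}\int_\Omega u_0$, one obtains $\int_\Omega|\nabla w^{\ell/2}|^2\ge\frac1\ep\int_\Omega w^\ell-c(\ep)$ and hence $\int_\Omega w^\ell\le\overline{\ep}\int_\Omega u^\ell+c(\overline{\ep})$ with $\overline{\ep}>0$ \emph{arbitrarily small}. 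Feeding this into your H\"older step yields $\int_\Omega u^{\sigma+q-2}w\le\ep_2\int_\Omega u^{\sigma+q-1}+c(\ep_2)$ with $\ep_2<\frac{\gamma}{2\delta}$, which is precisely what leaves a genuinely negative multiple of $\int_\Omega u^{\sigma+q-1}$ available to absorb the attraction term. A further small point: the paper then extracts the superlinear damping in \eqref{bddgoal} from the diffusion term via Gagliardo--Nirenberg rather than from $\int_\Omega(u+1)^{\sigma+q-1}$, which also avoids the requirement $q>1$ implicit in your exponent $\theta_1=(q-1)/\sigma>0$.
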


In the following we denote by $(u, v, w)$ the local classical solution of \eqref{prob} 
given in Lemma~{\rm \ref{lem_local}} 
and by $\tmax \in (0,\infty]$ its maximal existence time. 
To prove Theorem~\ref{thm_bdd1}, 
it is sufficient to derive 
an $L^\sigma$-estimate for $u$ with some 
$\sigma>n$,  
because Lemma~\ref{lem_Lsig_Linf} leads to an 
$L^\infty$-estimate for $u$ which together with the criterion \eqref{bc} 
implies the conclusion. 
The following lemma plays an important role in the derivation of the $L^\sigma$-estimate.
\begin{lem}\label{lemsub}
Let $\ell>1$. 
Then the first and third components of the solution satisfy that 
for all $\ep>0$, 
\begin{align*}
\int_\Omega w^{\ell} \le \ep\int_\Omega u^{\ell}+c(\ep)\quad {\it on}\ (0, \tmax)
\end{align*}
with some $c(\ep)>0$. 
\end{lem}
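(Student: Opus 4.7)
The idea is to combine conservation of mass for $u$ (which holds since $f \equiv 0$ and the boundary fluxes vanish) with the smoothing effect of the elliptic equation $-\Delta w + \delta w = \gamma u$ so as to obtain a \emph{sublinear} bound of $\int_\Omega w^\ell$ by $\int_\Omega u^\ell$; Young's inequality then converts this sublinearity into the claimed $\varepsilon$--$c(\varepsilon)$ form.

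First, integrating the first equation of \eqref{prob} over $\Omega$ and using the no-flux boundary condition together with $f \equiv 0$ yields $\tfrac{d}{dt}\int_\Omega u \equiv 0$, so that $\|u(\cdot,t)\|_{L^1(\Omega)} = \|u_0\|_{L^1(\Omega)} =: M$ for every $t \in (0, \tmax)$; moreover, the weak maximum principle applied to $-\Delta w + \delta w = \gamma u \ge 0$ with Neumann data forces $w \ge 0$. Next, I would choose $r \in (1, \ell)$ such that the Sobolev embedding $W^{2,r}(\Omega) \hookrightarrow L^\ell(\Omega)$ holds; such $r$ exists because the relevant threshold $\max\{1, n\ell/(n+2\ell)\}$ is strictly smaller than $\ell$. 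Standard $L^p$-regularity for the Neumann elliptic problem together with this embedding produces a constant $C_1 > 0$ with
\[
\|w(\cdot,t)\|_{L^\ell(\Omega)} \le C_1 \|u(\cdot,t)\|_{L^r(\Omega)}.
\]
Because $1 < r < \ell$, the $L^r$-norm of $u$ can be interpolated between the conserved $L^1$-norm and the $L^\ell$-norm via H\"older's inequality:
\[
\|u(\cdot,t)\|_{L^r(\Omega)} \le M^\theta \|u(\cdot,t)\|_{L^\ell(\Omega)}^{1-\theta},
\qquad \theta := \frac{1/r - 1/\ell}{1 - 1/\ell} \in (0,1).
\]
Combining the two displays yields
\[
\int_\Omega w^\ell \le C_1^\ell M^{\ell\theta} \left(\int_\Omega u^\ell\right)^{1-\theta}.
\]

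Since $1-\theta \in (0,1)$, Young's inequality in the form $K a^{1-\theta} \le \varepsilon a + c(\varepsilon)$, valid for every $\varepsilon > 0$, converts the sublinear bound into the claimed estimate $\int_\Omega w^\ell \le \varepsilon \int_\Omega u^\ell + c(\varepsilon)$. The main subtlety lies in the choice of $r$: one must keep it strictly above $\max\{1, n\ell/(n+2\ell)\}$ so that elliptic regularity and the Sobolev embedding into $L^\ell$ remain available, and strictly below $\ell$ so that the interpolation yields a genuinely sublinear power of $\|u\|_{L^\ell}$. Fortunately, the inequality $n\ell/(n+2\ell) < \ell$ always leaves room for such an $r$, so the structural obstruction is mild, and once $r$ is fixed the remainder is a routine Young-type manipulation.
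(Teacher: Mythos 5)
Your argument is correct, but it follows a genuinely different route from the paper's. The paper stays entirely at the level of energy identities: it multiplies the third equation by $w^{\ell-1}$, integrates by parts to produce the dissipation term $\frac{4(\ell-1)}{\ell^2}\int_\Omega|\nabla w^{\ell/2}|^2$, bounds this from below by $\frac1\ep\int_\Omega w^\ell-c(\ep)$ via the Gagliardo--Nirenberg inequality together with the conserved quantity $\int_\Omega w=\frac{\gamma}{\delta}\int_\Omega u_0$, and then absorbs the coupling term $\gamma\int_\Omega uw^{\ell-1}$ by H\"older and Young; the smallness of the coefficient in front of $\int_\Omega u^\ell$ comes from letting the dissipation dominate. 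You instead invoke $W^{2,r}$ elliptic regularity for the Neumann problem, the Sobolev embedding $W^{2,r}(\Omega)\hookrightarrow L^\ell(\Omega)$, and the $L^1$--$L^\ell$ interpolation of $\|u\|_{L^r}$, arriving at the sublinear bound $\int_\Omega w^\ell\le C(\int_\Omega u^\ell)^{1-\theta}$ before applying Young. Your choice of $r$ is sound: the window $(\max\{1,\,n\ell/(n+2\ell)\},\,\ell)$ is nonempty for every $\ell>1$, the exponent $\theta=\frac{1/r-1/\ell}{1-1/\ell}$ lies in $(0,1)$, and the constant inherits only the fixed mass $M=\|u_0\|_{L^1(\Omega)}$, so the final $\ep$--$c(\ep)$ form is legitimate. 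What each approach buys: yours is shorter and exposes the structural reason the estimate holds (the elliptic equation gains two derivatives, hence a strictly sublinear dependence on $\|u\|_{L^\ell}$), at the cost of citing the heavier Agmon--Douglis--Nirenberg machinery; the paper's testing argument is more elementary and self-contained, using only integration by parts, Gagliardo--Nirenberg and Young, which matches the toolkit used elsewhere in Section 3.
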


\begin{proof}
Let $t \in (0, \tmax)$ and put 
$u:=u(\cdot, t)$, $w:=w(\cdot, t)$. 
Multiplying the third equation in \eqref{prob} by $w^{\ell-1}$ 
and integrating it over $\Omega$, we obtain 
\begin{align*}
         \delta \int_\Omega w^{\ell}-\int_\Omega w^{\ell-1}\Delta w
    &= \gamma \int_\Omega uw^{\ell-1}.
\end{align*}
Since the second term on the left-hand side is rewritten as 
\begin{align*}
   -\int_\Omega w^{\ell-1}\Delta w
&= (\ell-1)\int_\Omega w^{\ell-2}|\nabla w|^2 =\frac{4(\ell-1)}{\ell^2}
    \int_\Omega\big|\nabla w^{\frac{\ell}{2}}\big|^2, 
\end{align*}
we infer
\begin{align}\label{uw2}
         \delta \int_\Omega w^{\ell}
         +\frac{4(\ell-1)}{\ell^2}
    \int_\Omega\big|\nabla w^{\frac{\ell}{2}}\big|^2
    &=\gamma \int_\Omega uw^{\ell-1}.
\end{align}
Here we note from the first equation in \eqref{prob} 
that the mass conservation 
$\int_\Omega u(\cdot, t)=\int_\Omega u_0$ holds 
for all $t \in (0, \tmax)$. 
Hence, integrating the third equation in \eqref{prob} over $\Omega$ gives 
\begin{align}\label{mass}
\int_\Omega w=\frac{\gamma}{\delta}\int_\Omega u=\frac{\gamma}{\delta}\int_\Omega u_0.
\end{align}
Applying the Gagliardo--Nirenberg inequality to $\big\|w^{\frac{\ell}{2}}\big\|_{L^2(\Omega)}$ 
and using the relation \eqref{mass}, we see that there exist $c_1, c_2>0$ such that
\begin{align}\label{GN}
       \big\|w^{\frac{\ell}{2}}\big\|_{L^2(\Omega)}
&\le c_1\Big(\big\|
            \nabla w^{\frac{\ell}{2}}
            \big\|_{L^2(\Omega)}^{\theta_1}
            \big\|w^{\frac{\ell}{2}}
            \big\|_{L^{\frac{2}{\ell}}(\Omega)}^{1-\theta_1}
       +\big\|w^{\frac{\ell}{2}}
            \big\|_{L^{\frac{2}{\ell}}(\Omega)}\Big) \notag
\\
&\le c_2\Big(
    \big\|\nabla w^{\frac{\ell}{2}}\big\|_{L^2(\Omega)}^{\theta_1}+1
    \Big),
\end{align}
where 
$\theta_1:=\frac{\frac{\ell}{2}-\frac{1}{2}}
                       {\frac{\ell}{2}+\frac{1}{n}-\frac{1}{2}} \in (0,1)$. 
Let $\ep>0$ (fixed later). 
Then Young's inequality implies that there exists $c_3(\ep)>0$ such that 
\begin{align*}
    \big\|\nabla w^{\frac{\ell}{2}}\big\|_{L^2(\Omega)}^{\theta_1}
\le \frac{1}{c_2}\sqrt{\frac{\ep}{2}}\big\|
                     \nabla w^{\frac{\ell}{2}}
                     \big\|_{L^2(\Omega)}
     +c_3(\ep).
\end{align*}
This together with \eqref{GN} yields that 
\begin{align*}
       \big\|w^{\frac{\ell}{2}}\big\|_{L^2(\Omega)}^2
       &\le \Big(\sqrt{\frac{\ep}{2}}\big\|
               \nabla w^{\frac{\ell}{2}}
               \big\|_{L^2(\Omega)} +c_2(c_3(\ep)+1)\Big)^2\\
&\le 
\ep\big\|
               \nabla w^{\frac{\ell}{2}}
               \big\|_{L^2(\Omega)}^2 +c_4(\ep)
\end{align*}
with some $c_4(\ep)>0$. 
Namely, we have
\begin{align}\label{west}
\int_\Omega\big|\nabla w^{\frac{\ell}{2}}\big|^2
\ge \frac{1}{\ep}\int_\Omega w^{\ell}-c_5(\ep)
\end{align}
with some $c_5(\ep)>0$. 
Combining \eqref{uw2} with \eqref{west} and using 
H$\ddot{{\rm o}}$lder's and Young's inequalities, we derive that  
\begin{align*}
\delta \int_\Omega w^{\ell}
   +\frac{c_6}{\ep}
    \int_\Omega w^{\ell} 
&\le \gamma \int_\Omega uw^{\ell-1}
	       +c_7(\ep)  \notag  \\
&\le \gamma 
       \Big(\int_\Omega u^{\ell}\Big)^{\frac{1}{\ell}}
       \Big(\int_\Omega w^{\ell}\Big)^{\frac{\ell-1}{\ell}}
       +c_7(\ep)  \notag  \\
&\le \gamma\Big[\frac{1}{\ell}\int_\Omega u^{\ell}
       +\Big(1-\frac{1}{\ell}\Big)\int_\Omega w^{\ell}\Big]
       +c_7(\ep)
\end{align*}
with some $c_6, c_7(\ep)>0$, and thus infer
\begin{align}\label{west2}
     \Big(\delta+\frac{c_6}{\ep}-\gamma+\frac{\gamma}{\ell}\Big)
     \int_\Omega w^{\ell} 
\le \frac{\gamma}{\ell}\int_\Omega u^{\ell}+c_7(\ep).
\end{align}
We now observe that if $\ep \in (0, \frac{c_6}{\gamma})$ then 
$\frac{c_6}{\ep}-\gamma>0$, that is,
\begin{align*}
\delta+\frac{c_6}{\ep}-\gamma+\frac{\gamma}{\ell}>0. 
\end{align*}
Therefore, picking $\ep \in (0, \frac{c_6}{\gamma})$, 
we have from \eqref{west2} that 
\begin{align*}
\int_\Omega w^{\ell} &\le \frac{\frac{\gamma}{\ell}}{\delta+\frac{c_6}{\ep}-\gamma+\frac{\gamma}{\ell}}
\int_\Omega u^{\ell}+
\frac{c_7(\ep)}{\delta+\frac{c_6}{\ep}-\gamma+\frac{\gamma}{\ell}}\notag\\
&= \frac{\frac{\gamma}{\ell}\ep}{(\delta-\gamma+\frac{\gamma}{\ell})\ep+c_6}
\int_\Omega u^{\ell}+
\frac{c_7(\ep)\ep}{(\delta-\gamma+\frac{\gamma}{\ell})\ep+c_6}.
\end{align*}
Noticing that for all $\overline{\ep}>0$ there exists 
$\ep \in (0, \frac{c_6}{\gamma})$ such that 
$\frac{\frac{\gamma}{\ell}\ep}{(\delta-\gamma+\frac{\gamma}{\ell})\ep+c_6}<\overline{\ep}$, 
we arrive at the conclusion. 
\end{proof}

We now prove an $L^\sigma$-estimate for $u$. 

\begin{lem}\label{lem_Lsig_p<q}
Assume that $p<q$. 
Then for some $\sigma>n$ there exists $C>0$ such that 
\begin{align*}
    \|u(\cdot, t)\|_{L^\sigma(\Omega)} \le C
\end{align*}
for all $t \in (0,\tmax)$.
\end{lem}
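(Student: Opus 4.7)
The plan is to derive an $L^\sigma$-estimate for $u$ by testing the first equation in \eqref{prob} against $(u+1)^{\sigma-1}$ with $\sigma>n$ chosen sufficiently large. This yields the identity
\begin{align*}
\frac{1}{\sigma}\frac{d}{dt}\int_\Omega (u+1)^\sigma + (\sigma-1)\int_\Omega (u+1)^{\sigma+m-3}|\nabla u|^2 = I_A + I_R,
\end{align*}
where $I_A$ and $I_R$ are the attraction and repulsion cross-diffusion contributions; note that the diffusion dissipation on the left-hand side is non-positive and will simply be discarded in the estimate.

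To handle $I_A$ and $I_R$ I would introduce the primitives
\begin{align*}
G(u):=\int_0^u s(s+1)^{\sigma+p-4}\,ds,\qquad H(u):=\int_0^u s(s+1)^{\sigma+q-4}\,ds,
\end{align*}
so that $\nabla G(u)=u(u+1)^{\sigma+p-4}\nabla u$ and $\nabla H(u)=u(u+1)^{\sigma+q-4}\nabla u$. A second integration by parts together with the substitutions $\Delta v=\beta v-\alpha u$ and $\Delta w=\delta w-\gamma u$ coming from the elliptic equations gives
\begin{align*}
I_A &= \chi(\sigma-1)\alpha\int_\Omega u\,G(u) - \chi(\sigma-1)\beta\int_\Omega v\,G(u),\\
I_R &= -\xi(\sigma-1)\gamma\int_\Omega u\,H(u) + \xi(\sigma-1)\delta\int_\Omega w\,H(u).
\end{align*}
Since $v\ge 0$ the $v$-term in $I_A$ is non-positive and is dropped, leaving positive contributions of leading orders $\int_\Omega u(u+1)^{\sigma+p-2}$ and $\int_\Omega w(u+1)^{\sigma+q-2}$, together with the decisive good term $-\xi(\sigma-1)\gamma\int_\Omega u\,H(u)$, which is comparable to $-\int_\Omega u(u+1)^{\sigma+q-2}$.

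The hypothesis $p<q$ enters precisely here. Since $\sigma+p-1<\sigma+q-1$, Young's inequality combined with Lemma~\ref{lem_some_ineq} gives, for any $\eta>0$,
\begin{align*}
\int_\Omega u(u+1)^{\sigma+p-2}\le \eta\int_\Omega u(u+1)^{\sigma+q-2}+C(\eta),
\end{align*}
which absorbs the attraction bad term into the repulsion good term. The remaining bad piece $\int_\Omega w(u+1)^{\sigma+q-2}$ is first split by Young's inequality as $\eta_1\int_\Omega(u+1)^{\sigma+q-1}+C(\eta_1)\int_\Omega w^{\sigma+q-1}$, and then Lemma~\ref{lemsub} (applied with $\ell=\sigma+q-1$, which exceeds $1$ once $\sigma$ is chosen large) turns the $w^{\sigma+q-1}$-integral into $\eta_2\int_\Omega u^{\sigma+q-1}+c(\eta_2)$; both resulting $u$-integrals are of the same order as the good term and are absorbed via Lemma~\ref{lem_some_ineq}. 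Choosing $\eta,\eta_1,\eta_2$ small in terms of $\chi\alpha,\xi\gamma,\xi\delta$ and $\sigma$ produces
\begin{align*}
\frac{d}{dt}\int_\Omega (u+1)^\sigma \le -c_1\int_\Omega (u+1)^{\sigma+q-1}+c_2.
\end{align*}

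The last step is to convert this into a closed ODI for $y(t):=\int_\Omega(u+1)^\sigma$. Since the mass $\int_\Omega u(\cdot,t)=\int_\Omega u_0$ is conserved, H\"older's inequality interpolates $\int_\Omega(u+1)^\sigma$ between $\int_\Omega(u+1)\le\int_\Omega u_0+|\Omega|$ and $\int_\Omega(u+1)^{\sigma+q-1}$; taking $\sigma$ large enough that $\sigma+q-1>\sigma$ then yields $\int_\Omega(u+1)^{\sigma+q-1}\ge c_3 y^{1+\theta_1}$ with some $\theta_1>0$, and hence the prototype inequality \eqref{bddgoal}. A standard ODE comparison gives $\sup_{t\in(0,\tmax)}y(t)<\infty$, which is the claim. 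The main obstacle I anticipate is the bookkeeping: one has to fix $\sigma>n$ large enough so that none of the denominators $\sigma+p-2,\sigma+p-3,\sigma+q-2,\sigma+q-3$ vanish, that $\sigma+q-1>1$ so Lemma~\ref{lemsub} applies, and that the accumulated Young-type constants stay small enough for the single good term $-c\int_\Omega u(u+1)^{\sigma+q-2}$ to dominate every positive residual.
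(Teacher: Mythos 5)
Your derivation of the dissipation inequality follows the paper's proof almost verbatim up to the point where all bad terms have been absorbed: the same testing with $(u+1)^{\sigma-1}$, the same double integration by parts with the primitives $\int_0^u s(s+1)^{\sigma+p-4}\,ds$ and $\int_0^u s(s+1)^{\sigma+q-4}\,ds$, the same use of $p<q$ via Young's inequality to absorb $\int_\Omega u^{\sigma+p-1}$ into $-\int_\Omega u^{\sigma+q-1}$, and the same treatment of $\int_\Omega u^{\sigma+q-2}w$ via Lemma~\ref{lemsub}. Up to there the argument is sound.

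The gap is in your final step, and it stems from your decision to discard the diffusion dissipation at the outset. Having thrown it away, the only superlinear negative term available is $-c_1\int_\Omega (u+1)^{\sigma+q-1}$, and your closure of the ODI requires $\int_\Omega(u+1)^{\sigma+q-1}\ge c_3\big(\int_\Omega(u+1)^\sigma\big)^{1+\theta_1}$. This interpolation (between $L^1$, controlled by the conserved mass, and $L^{\sigma+q-1}$) needs $\sigma+q-1>\sigma$, i.e.\ $q>1$ --- a condition that no choice of ``$\sigma$ large'' can manufacture, contrary to what you write. But the lemma (and Theorem~\ref{thm_bdd1}) assumes only $p<q$ with $p,q\in\R$ arbitrary, so e.g.\ $p=-1$, $q=0$ is admissible; in that regime $\int_\Omega(u+1)^{\sigma+q-1}$ is of strictly lower order than $\int_\Omega(u+1)^\sigma$ and cannot even dominate it linearly, so your ODI does not close. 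The paper avoids this by \emph{retaining} the term $-\frac{4(\sigma-1)}{(\sigma+m-1)^2}\int_\Omega\big|\nabla(u+1)^{\frac{\sigma+m-1}{2}}\big|^2$ (see \eqref{DI3}), using the repulsion term only to cancel the attraction term completely, and then producing the superlinear damping in \eqref{bddgoal} from the diffusion term via the Gagliardo--Nirenberg inequality together with the mass conservation \eqref{mass} (the estimate \eqref{GNu}), which works for every $q\in\R$. Your route is a valid shortcut only in the subcase $q>1$; to cover the stated generality you must keep the gradient term and run the Gagliardo--Nirenberg step.
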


\begin{proof}
Let $\sigma>1$ be sufficiently large. 
We first obtain from the first equation in \eqref{prob} with $f(u) \equiv 0$ and 
integration by parts that 
\begin{align}\label{DI1}
  &\frac{1}{\sigma}\frac{d}{dt}\int_\Omega (u+1)^\sigma \notag 
\\
  &\quad\,
  =\int_\Omega (u+1)^{\sigma-1}\nabla \cdot
                 \big((u+1)^{m-1}\nabla u\big) \notag
\\
  &\qquad\,\,
   -\chi\int_\Omega (u+1)^{\sigma-1}\nabla\cdot
   \big(u(u+1)^{p-2}\nabla v\big)
   +\xi\int_\Omega (u+1)^{\sigma-1}\nabla\cdot
   \big(u(u+1)^{q-2}\nabla w\big) \notag
\\
  &\quad\,=
  -(\sigma-1)\int_\Omega 
                 (u+1)^{\sigma+m-3}|\nabla u|^2\notag
\\
  &\qquad\,\,
   +\chi(\sigma-1)\int_\Omega 
                  u(u+1)^{\sigma+p-4}\nabla u\cdot\nabla v
   -\xi(\sigma-1)\int_\Omega 
                 u(u+1)^{\sigma+q-4}\nabla u\cdot\nabla w
   \notag
\\
  &\quad\,=:
   I_1+I_2+I_3
\end{align}
for all $t \in (0,\tmax)$. 
We estimate the terms $I_1, I_2, I_3$. 
As to the first term $I_1$, we rewrite it as 
\begin{align}\label{I1}
I_1=-\frac{4(\sigma-1)}{(\sigma+m-1)^2}
     \int_\Omega \big|\nabla(u+1)^{\frac{\sigma+m-1}{2}}\big|^2.
\end{align}
We next deal with the second term $I_2$ and third term $I_3$. 
As to the former, integration by parts and the second equation in \eqref{prob} 
lead to 
\begin{align}\label{I21}
I_2 &=\chi(\sigma-1)
        \int_\Omega \nabla \Big[\int_0^u s(s+1)^{\sigma+p-4}\,ds\Big]
                           \cdot \nabla v
    \notag
\\
    &=\chi(\sigma-1)
        \int_\Omega \Big[\int_0^u s(s+1)^{\sigma+p-4}\,ds\Big] \cdot(-\Delta v)
    \notag
\\
    &=\chi(\sigma-1)
        \int_\Omega \Big[\int_0^u s(s+1)^{\sigma+p-4}\,ds\Big]\cdot 
        (\alpha u-\beta v)
    \notag
\\
    &\le \chi\alpha(\sigma-1)
           \int_\Omega \Big[\int_0^u s(s+1)^{\sigma+p-4}\,ds\Big] u.
\end{align}
Here we infer that for $\sigma>-p+2$, 
\begin{align*}
    \Big[\int_0^u s(s+1)^{\sigma+p-4}\,ds\Big]u
&\le \Big[\int_0^u (s+1)^{\sigma+p-3}\,ds\Big]u \\
&\le \frac{1}{\sigma+p-2}(u+1)^{\sigma+p-2}u\\
&\le \frac{1}{\sigma+p-2}(u+1)^{\sigma+p-1}.
\end{align*}
Combining the above estimate with \eqref{I21} and 
using Lemma~\ref{lem_some_ineq} with $\ep=1$ and $\sigma>-p+2$, we have
\begin{align}\label{I22}
I_2 &\le \frac{\chi\alpha(\sigma-1)}{\sigma+p-2}
            \Big(2\int_\Omega u^{\sigma+p-1}+c_1\Big), 
\end{align}
with some $c_1>0$. 
Similarly, as to the term $I_3$, we establish
\begin{align}\label{I31}
I_3 &=\xi(\sigma-1)
        \int_\Omega \Big[\int_0^u s(s+1)^{\sigma+q-4}\,ds\Big] \cdot\Delta w\notag \\
&=\xi(\sigma-1)
        \int_\Omega \Big[\int_0^u s(s+1)^{\sigma+q-4}\,ds\Big] \cdot (\delta w-\gamma u).
\end{align}
Here, noting that $s^{\sigma+q-3} \le s(s+1)^{\sigma+q-4} \le (s+1)^{\sigma+q-3}$ for $\sigma\ge-q+4$, 
we see that
\begin{align}\label{uplow}
\frac{1}{\sigma+q-2}u^{\sigma+q-2} 
\le \int_0^u s(s+1)^{\sigma+q-4}\,ds
\le \frac{1}{\sigma+q-2}(u+1)^{\sigma+q-2},
\end{align}
where we neglected the term 
$-\frac{1}{\sigma+q-2}$ 
on the most right-hand side. 
Due to Lemma~\ref{lem_some_ineq} with $\ep=1$ we obtain that
\begin{align}\label{uw01}
       \Big[\int_0^u s(s+1)^{\sigma+q-4}\,ds\Big]w
&\le \frac{1}{\sigma+q-2} (u+1)^{\sigma+q-2}w \notag \\
&\le \frac{1}{\sigma+q-2} \Big(2u^{\sigma+q-2}w+c_2w\Big), 
\end{align}
with some $c_2>0$. 
Therefore a combination of the above estimates 
\eqref{I31}--\eqref{uw01} yields that 
\begin{align}\label{I32}
I_3 &\le \frac{\xi(\sigma-1)}{\sigma+q-2}
            \Big(2\delta\int_\Omega u^{\sigma+q-2}w
            +\delta c_2\int_\Omega w
                   -\gamma\int_\Omega u^{\sigma+q-1}\Big).
\end{align}
Collecting \eqref{I1}, \eqref{I22} and \eqref{I32} 
in \eqref{DI1}, we derive
\begin{align}\label{I33}
    \frac{1}{\sigma}\frac{d}{dt}\int_\Omega (u+1)^\sigma &\le 
      -\frac{4(\sigma-1)}{(\sigma+m-1)^2}
     \int_\Omega \big|\nabla(u+1)^{\frac{\sigma+m-1}{2}}\big|^2 \notag
\\
    &\quad\,
      +\frac{\chi\alpha(\sigma-1)}{\sigma+p-2}
            \Big(2\int_\Omega u^{\sigma+p-1}+c_1\Big) \notag
\\
    &\quad\,
      +\frac{\xi(\sigma-1)}{\sigma+q-2}
            \Big(2\delta\int_\Omega u^{\sigma+q-2}w
            +\delta c_2\int_\Omega w
                   -\gamma\int_\Omega u^{\sigma+q-1}\Big)
\end{align}
for all $t \in (0,\tmax)$. 
Moreover, taking $\ep_1>0$ which will be fixed later and applying Young's inequality to $u^{\sigma+p-1}$, 
we have $u^{\sigma+p-1} \le \ep_1 u^{\sigma+q-1}+c_3(\ep_1)$ with some $c_3(\ep_1)>0$.
Additionally, again by the relation \eqref{mass} we see that
\begin{align}\label{DI2}
    &\frac{1}{\sigma}\frac{d}{dt}\int_\Omega (u+1)^\sigma
      +\frac{4(\sigma-1)}{(\sigma+m-1)^2}
     \int_\Omega \big|\nabla(u+1)^{\frac{\sigma+m-1}{2}}\big|^2 \notag 
\\ 
    &\quad\, \le 
      \frac{\chi\alpha(\sigma-1)}{\sigma+p-2}
            \Big[2\Big(\ep_1\int_\Omega u^{\sigma+q-1}
                   +c_3(\ep_1)\Big)+c_1\Big] \notag
\\
    &\qquad\,\,
      +\frac{\xi(\sigma-1)}{\sigma+q-2}
            \Big(2\delta\int_\Omega u^{\sigma+q-2}w
            +c_4
                   -\gamma\int_\Omega u^{\sigma+q-1}\Big)
\end{align}
for all $t \in (0,\tmax)$ 
with some $c_4>0$. 
We next estimate the term $\int_\Omega u^{\sigma+q-2}w$.  
Using the H$\ddot{{\rm o}}$lder inequality, we infer
\begin{align*}
     \int_\Omega u^{\sigma+q-2}w
\le \Big(\int_\Omega u^{\sigma+q-1}\Big)^{\frac{\sigma+q-2}{\sigma+q-1}}
     \Big(\int_\Omega w^{\sigma+q-1}\Big)^{\frac{1}{\sigma+q-1}}.
\end{align*}
Here we take $\ep_2>0$ which will be fixed later. 
Employing the Young inequality
as well as applying Lemma~\ref{lemsub} with $\ell=\sigma+q-1$ 
and $\ep=(\frac{\ep_2}{2})^{\sigma+q-1}$ to 
$\int_\Omega w^{\sigma+q-1}$, 
we establish
\begin{align}\label{uwest*}
  \int_\Omega u^{\sigma+q-2}w 
	&\le \Big(\int_\Omega u^{\sigma+q-1}\Big)^{\frac{\sigma+q-2}{\sigma+q-1}}
     \Big[\Big(\frac{\ep_2}{2}\Big)^{\sigma+q-1}\int_\Omega u^{\sigma+q-1}+c_5(\ep_2)\Big]^{\frac{1}{\sigma+q-1}}\notag\\
&\le \frac{\ep_2}{2}\int_\Omega u^{\sigma+q-1}+c_5(\ep_2)^{{\frac{1}{\sigma+q-1}}}\Big(\int_\Omega u^{\sigma+q-1}\Big)^{\frac{\sigma+q-2}{\sigma+q-1}} \notag\\
&\le \frac{\ep_2}{2}\int_\Omega u^{\sigma+q-1}+c_5(\ep_2)^{{\frac{1}{\sigma+q-1}}}\Big(\frac{\ep_2}{2c_5(\ep_2)^{{\frac{1}{\sigma+q-1}}}}\int_\Omega u^{{\sigma+q-1}}+c_6(\ep_2)\Big) \notag\\
&=\ep_2\int_\Omega u^{\sigma+q-1}+c_7(\ep_2)
\end{align}
with some $c_5(\ep_2), c_6(\ep_2), c_7(\ep_2)>0$. 
Setting 
$c_8:=\frac{\chi\alpha(\sigma-1)}{\sigma+p-2}$ and
$c_9:=\frac{\xi(\sigma-1)}{\sigma+q-2}$, 
we derive from \eqref{DI2} and \eqref{uwest*} that
\begin{align}\label{uest}
    &\frac{1}{\sigma}\frac{d}{dt}\int_\Omega (u+1)^\sigma
      +\frac{4(\sigma-1)}{(\sigma+m-1)^2}
     \int_\Omega \big|\nabla(u+1)^{\frac{\sigma+m-1}{2}}\big|^2 \notag 
\\ 
    &\quad\, \le           \frac{\chi\alpha(\sigma-1)}{\sigma+p-2}
            \Big[2\Big(\ep_1\int_\Omega u^{\sigma+q-1}
                   +c_3(\ep_1)\Big)+c_1\Big] \notag
\\
    &\qquad\,\,
      +\frac{\xi(\sigma-1)}{\sigma+q-2}
            \Big(2\delta\int_\Omega u^{\sigma+q-2}w
            +c_4
                   -\gamma\int_\Omega u^{\sigma+q-1}\Big) \notag\\
&\quad\, \le 2c_8\ep_1\int_\Omega u^{\sigma+q-1}
+c_9\Big[2\delta\Big(\ep_2\int_\Omega u^{\sigma+q-1}+c_7(\ep_2)\Big)
-\gamma\int_\Omega u^{\sigma+q-1}\Big]+c_{10}(\ep_1) \notag\\
&\quad\,= 2c_8\ep_1\int_\Omega u^{\sigma+q-1}
+c_9(2\delta\ep_2-\gamma)\int_\Omega u^{\sigma+q-1}+c_{11}(\ep_1, \ep_2)
\end{align}
for all $t \in (0,\tmax)$ with some $c_{10}(\ep_1), c_{11}(\ep_1, \ep_2)>0$. 
Here we choose $\ep_2>0$ satisfying $\ep_2<\frac{\gamma}{2\delta}$, that is, 
$2\delta\ep_2-\gamma<0$. 
Then we have from \eqref{uest} that
\begin{align}\label{uest*}
&\frac{1}{\sigma}\frac{d}{dt}\int_\Omega (u+1)^\sigma
      +\frac{4(\sigma-1)}{(\sigma+m-1)^2}
     \int_\Omega \big|\nabla(u+1)^{\frac{\sigma+m-1}{2}}\big|^2 \notag 
\\ 
&\quad\, \le \big(2c_8\ep_1
-c_9(\gamma-2\delta\ep_2)\big)\int_\Omega u^{\sigma+q-1}+
c_{11}(\ep_1)
\end{align}
for all $t \in (0,\tmax)$. 
We let 
\begin{align*}
\ep_1:=\frac{c_9(\gamma-2\delta\ep_2)}{2c_8}>0.
\end{align*}
Therefore we obtain from \eqref{uest*} that
\begin{align}\label{DI3}
    \frac{1}{\sigma}\frac{d}{dt}\int_\Omega (u+1)^\sigma
      +\frac{4(\sigma-1)}{(\sigma+m-1)^2}
     \int_\Omega \big|\nabla(u+1)^{\frac{\sigma+m-1}{2}}\big|^2 \le c_{11}
\end{align}
for all $t \in (0,\tmax)$. 
We finally estimate the second term on the left-hand side of \eqref{DI3} 
in order to derive a differential inequality for $\int_\Omega (u+1)^{\sigma}$. 
Again using the Gagliardo--Nirenberg inequality and the mass conservation, 
we see that
\begin{align*}
&\|u(\cdot,t)+1\|_{L^\sigma(\Omega)}^\sigma\\
&\quad\,
=\big\|(u(\cdot,t)+1)^{\frac{\sigma+m-1}{2}}\big\|_{L^{\frac{2\sigma}{\sigma+m-1}}(\Omega)}^{\frac{2}{\sigma+m-1}}\\
&\quad\,\le c_{12}\Big(\big\|\nabla(u(\cdot,t)+1)^{\frac{\sigma+m-1}{2}}\big\|_{L^2(\Omega)}^{\theta_2}\big\|(u(\cdot,t)+1)^{\frac{\sigma+m-1}{2}}\big\|_{L^{\frac{2}{\sigma+m-1}}(\Omega)}^{1-\theta_2}\\
&\hspace{7cm}
+\big\|(u(\cdot,t)+1)^{\frac{\sigma+m-1}{2}}\big\|_{L^{\frac{2}{\sigma+m-1}}(\Omega)}\Big)^{\frac{2}{\sigma+m-1}}\\
&\quad\,\le c_{12}\Big(\big\|\nabla(u(\cdot,t)+1)^{\frac{\sigma+m-1}{2}}\big\|_{L^2(\Omega)}^{\frac{2}{\sigma+m-1}\theta_2}\|u(\cdot,t)+1\|_{L^1(\Omega)}^{1-\theta_2}
+\|u(\cdot,t)+1\|_{L^1(\Omega)}\Big)\\
&\quad\,
\le c_{13}\Big(\big\|\nabla(u(\cdot,t)+1)^{\frac{\sigma+m-1}{2}}\big\|_{L^2(\Omega)}^{\frac{2}{\sigma+m-1}\theta_2}+1\Big)
\end{align*}
for all $t \in (0,\tmax)$ with $\theta_2:=\frac{\frac{\sigma+m-1}{2}-\frac{\sigma+m-1}{2\sigma}}{\frac{\sigma+m-1}{2}+\frac1n-\frac12} \in (0,1)$ 
and $c_{12}, c_{13}>0$. 
This implies 
\begin{align}\label{GNu}
\big\|\nabla(u(\cdot,t)+1)^{\frac{\sigma+m-1}{2}}\big\|_{L^2(\Omega)} 
&\ge \Big(\frac{1}{c_{13}}\|u(\cdot,t)+1\|_{L^\sigma(\Omega)}^\sigma-1\Big)^{\frac{\sigma+m-1}{2\theta_2}} \notag \\
&\ge c_{14}\|u(\cdot,t)+1\|_{L^\sigma(\Omega)}^{\frac{\sigma+m-1}{2\theta_2}}-1
\end{align}
for all $t \in (0,\tmax)$ with some $c_{14}>0$. 
A combination of \eqref{DI3} and \eqref{GNu} yields that
\begin{align*}
    \frac{1}{\sigma}\frac{d}{dt}\int_\Omega (u+1)^\sigma
    +c_{15}
     \Big(\int_\Omega (u+1)^\sigma\Big)^{\frac{\sigma+m-1}{2\theta_2}}
    &\le c_{16}
\end{align*}
for all $t \in (0,\tmax)$ with some $c_{15}, c_{16}>0$. 
Noting from $2\theta_2<2<\sigma+m-1$ for sufficiently large $\sigma$ 
that $\frac{\sigma+m-1}{2\theta_2}>1$, we infer that
\begin{align*}
\int_\Omega (u+1)^\sigma \le c_{17}
\end{align*}
with some $c_{17}>0$. 
This proves the conclusion for all sufficiently large $\sigma>1$. 
\end{proof}

We are now in a position to complete the proof of Theorem~\ref{thm_bdd1}.

\begin{prth3.1} 
A combination of Lemmas~\ref{lem_Lsig_p<q} and~\ref{lem_Lsig_Linf} along with the criterion 
\eqref{bc} leads to the conclusion of Theorem~\ref{thm_bdd1}. \qed
\end{prth3.1}


\subsection{The case \boldmath{$p=q$}}
\label{Subsec3.2}

In this subsection we state the following theorem 
guaranteeing global existence and boundedness 
in \eqref{prob} in the case $p=q$.

\begin{thm}\label{thm_bdd2}
Assume that $p=q$ and $\chi\alpha-\xi\gamma<0$. 
Then for all $u_0$ satisfying \eqref{u0} 
there exists a unique triplet $(u, v, w)$ 
of nonnegative functions
\begin{align*}
\begin{cases}
       u \in C^0(\cl{\Omega} \times [0, \infty)) \cap 
              C^{2,1}(\cl{\Omega} \times (0, \infty)), 
     \\
       v, w \in 
        \bigcap_{\vartheta>n}C^0([0, \infty); W^{1,\vartheta}(\Omega))
        \cap C^{2,1}(\cl{\Omega} \times (0, \infty)),
\end{cases}
\end{align*}
which solves \eqref{prob} in the classical sense, 
and is bounded, that is, $\|u(\cdot,t)\|_{L^\infty(\Omega)} \le C$ 
for all $t>0$ with some $C>0$.
\end{thm}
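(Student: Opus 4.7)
The plan is to mimic the derivation in Lemma~\ref{lem_Lsig_p<q}: reduce the boundedness claim to an $L^\sigma$-estimate for $u$ for some $\sigma>n$, and then invoke Lemma~\ref{lem_Lsig_Linf} together with the extensibility criterion~\eqref{bc}. I would again test the first equation in~\eqref{prob} against $(u+1)^{\sigma-1}$ for $\sigma>1$ sufficiently large, use integration by parts and the two elliptic equations, and arrive at the decomposition $\frac{1}{\sigma}\frac{d}{dt}\int_\Omega(u+1)^\sigma=I_1+I_2+I_3$ as in~\eqref{DI1}. The genuinely new point when $p=q$ is that $I_2$ and $I_3$ both contribute leading terms of the common power $\int_\Omega u^{\sigma+p-1}$, so the Young-absorption step used in~\eqref{uest} (which required $p<q$) is no longer available, and the sign hypothesis $\chi\alpha-\xi\gamma<0$ must be exploited directly.

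Concretely, applying Lemma~\ref{lem_some_ineq} with a small parameter $\eta>0$ in place of $\eta=1$, combined with the two-sided bound~\eqref{uplow}, I would establish
\begin{align*}
I_2&\le \frac{\chi\alpha(\sigma-1)}{\sigma+p-2}\Big((1+\eta)\int_\Omega u^{\sigma+p-1}+C_\eta\Big),\\
I_3&\le \frac{\xi(\sigma-1)}{\sigma+p-2}\Big((1+\eta)\delta\int_\Omega u^{\sigma+p-2}w+\delta C_\eta\int_\Omega w-\gamma\int_\Omega u^{\sigma+p-1}\Big).
\end{align*}
Since $p=q$ the prefactors $\tfrac{\sigma-1}{\sigma+p-2}$ coincide, so the two $\int_\Omega u^{\sigma+p-1}$ contributions combine into a single term of coefficient proportional to $\chi\alpha(1+\eta)-\xi\gamma$, which is strictly negative for $\eta$ small enough thanks to the hypothesis. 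The remaining mixed term $\int_\Omega u^{\sigma+p-2}w$ I would absorb exactly as in~\eqref{uwest*}, via H\"older's inequality, Lemma~\ref{lemsub} applied with $\ell=\sigma+p-1$ and a sufficiently small parameter, and Young's inequality, to obtain $\int_\Omega u^{\sigma+p-2}w\le \eta'\int_\Omega u^{\sigma+p-1}+C(\eta')$ for any prescribed $\eta'>0$; the auxiliary term $\int_\Omega w$ is controlled by~\eqref{mass}.

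Choosing $\eta$ and $\eta'$ so small that $\chi\alpha(1+\eta)-\xi\gamma+(1+\eta)\xi\delta\eta'<0$, which is feasible precisely because $\chi\alpha-\xi\gamma<0$, eliminates all $u^{\sigma+p-1}$-contributions and yields
\begin{align*}
\frac{1}{\sigma}\frac{d}{dt}\int_\Omega(u+1)^\sigma+\frac{4(\sigma-1)}{(\sigma+m-1)^2}\int_\Omega\big|\nabla(u+1)^{\frac{\sigma+m-1}{2}}\big|^2\le C.
\end{align*}
The Gagliardo--Nirenberg argument at the end of Lemma~\ref{lem_Lsig_p<q}, together with mass conservation, then produces $\sup_{t<\tmax}\|u(\cdot,t)\|_{L^\sigma(\Omega)}<\infty$ for all sufficiently large $\sigma$, after which Lemma~\ref{lem_Lsig_Linf} and~\eqref{bc} close the argument. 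The main obstacle is the joint calibration of the small parameters: Lemma~\ref{lem_some_ineq} unavoidably inflates the attraction coefficient by $1+\eta$, and absorbing the cross term adds a further perturbation $(1+\eta)\xi\delta\eta'$, so the strict inequality $\chi\alpha-\xi\gamma<0$ must leave enough margin to swallow both; the borderline case $\chi\alpha=\xi\gamma$ falls outside the reach of this scheme, which explains why it is excluded from the statement.
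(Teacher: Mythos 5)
Your proposal is correct and follows essentially the same route as the paper's proof of Lemma~\ref{lem_Lsig_p=q}: apply Lemma~\ref{lem_some_ineq} with a small parameter to keep the attraction coefficient at $\chi\alpha(1+\ep_1)$, absorb the mixed term $\int_\Omega u^{\sigma+p-2}w$ via H\"older, Lemma~\ref{lemsub} and Young as in \eqref{uwest*}, use $\chi\alpha-\xi\gamma<0$ to make the net coefficient of $\int_\Omega u^{\sigma+p-1}$ nonpositive, and close with the Gagliardo--Nirenberg step, Lemma~\ref{lem_Lsig_Linf} and \eqref{bc}. The only cosmetic difference is that the paper calibrates $\ep_2$ to cancel the coefficient exactly, whereas you make it strictly negative; both suffice.
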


As in the previous subsection, 
we denote by $(u, v, w)$ the local classical solution of \eqref{prob} 
given in Lemma~{\rm \ref{lem_local}} 
and by $\tmax \in (0,\infty]$ its maximal existence time. 
We prove Theorem~\ref{thm_bdd2} by deriving an $L^\sigma$-estimate for $u$.

\begin{lem}\label{lem_Lsig_p=q}
Suppose that $p=q$. Then for some $\sigma>n$ there exists $C>0$ such that 
\begin{align*}
    \|u(\cdot, t)\|_{L^\sigma(\Omega)} \le C
\end{align*}
for all $t \in (0,\tmax)$.
\end{lem}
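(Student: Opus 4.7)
The plan is to essentially repeat the argument of Lemma~\ref{lem_Lsig_p<q}, but with a crucial reshuffling of the leading terms. Testing the first equation of \eqref{prob} by $(u+1)^{\sigma-1}$ and integrating by parts produces, exactly as in \eqref{DI1}, a decomposition $I_1+I_2+I_3$ with $I_1$ the diffusion dissipation and $I_2,I_3$ the attraction and repulsion contributions. For $I_2$ I would substitute $-\Delta v=\alpha u-\beta v$ and bound
\[
\int_0^u s(s+1)^{\sigma+p-4}\,ds\le \tfrac{1}{\sigma+p-2}(u+1)^{\sigma+p-2},
\]
so that after an application of Lemma~\ref{lem_some_ineq} with a small parameter $\ep>0$ (replacing the $\ep=1$ used in Lemma~\ref{lem_Lsig_p<q})
\[
I_2\le \tfrac{\chi\alpha(\sigma-1)}{\sigma+p-2}\Big[(1+\ep)\int_\Omega u^{\sigma+p-1}+C_\ep|\Omega|\Big].
\]
For $I_3$ I would use $\Delta w=\delta w-\gamma u$ together with the lower bound $\int_0^u s(s+1)^{\sigma+q-4}\,ds\ge \tfrac{1}{\sigma+q-2}u^{\sigma+q-2}$ (valid once $\sigma$ is large enough), so that $I_3$ contributes a negative term $-\tfrac{\xi\gamma(\sigma-1)}{\sigma+q-2}\int_\Omega u^{\sigma+q-1}$ together with a positive term involving $\int_\Omega u^{\sigma+q-2}w$.

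The point at which this proof differs from the $p<q$ case is that now, since $p=q$, the exponents $\sigma+p-1$ and $\sigma+q-1$ coincide and Young's inequality can no longer be used to absorb one into the other. Instead, the two leading-order terms must be directly compared, and their combined coefficient is
\[
\frac{\sigma-1}{\sigma+p-2}\bigl[\chi\alpha(1+\ep)-\xi\gamma\bigr].
\]
This is precisely where the standing assumption $\chi\alpha-\xi\gamma<0$ enters: for $\ep>0$ sufficiently small this coefficient is strictly negative, and we retain a genuine $-c\int_\Omega u^{\sigma+p-1}$ term after the cancellation.

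The remaining positive term $\tfrac{2\xi\delta(\sigma-1)}{\sigma+q-2}\int_\Omega u^{\sigma+q-2}w$ is controlled in exactly the same way as in \eqref{uwest*}: apply Hölder's inequality with exponents $\tfrac{\sigma+q-1}{\sigma+q-2}$ and $\sigma+q-1$, invoke Lemma~\ref{lemsub} with $\ell=\sigma+q-1$ on $\int_\Omega w^{\sigma+q-1}$, and use Young's inequality to deduce $\int_\Omega u^{\sigma+q-2}w\le \ep_2\int_\Omega u^{\sigma+q-1}+c(\ep_2)$ with $\ep_2>0$ arbitrary. Choosing $\ep$ and then $\ep_2$ small enough so that the total coefficient of $\int_\Omega u^{\sigma+p-1}$ remains strictly negative, one arrives at
\[
\frac{1}{\sigma}\frac{d}{dt}\int_\Omega (u+1)^\sigma
+\frac{4(\sigma-1)}{(\sigma+m-1)^2}\int_\Omega \big|\nabla (u+1)^{\frac{\sigma+m-1}{2}}\big|^2\le c
\]
on $(0,\tmax)$, i.e.\ precisely the analogue of \eqref{DI3}. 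From here the conclusion follows by the same Gagliardo--Nirenberg argument combined with mass conservation that produced \eqref{GNu} and the subsequent ODE argument in Lemma~\ref{lem_Lsig_p<q}.

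The main obstacle is the simultaneous calibration of the two small parameters: $\ep$ must be small enough that $\chi\alpha(1+\ep)<\xi\gamma$, and $\ep_2$ must then be small enough that $2\xi\delta\ep_2$ does not exhaust the remaining margin $\xi\gamma-\chi\alpha(1+\ep)$. Once $\sigma>n$ is also taken large enough to ensure $\sigma\ge -q+4$, $\sigma>-p+2$, and $\frac{\sigma+m-1}{2\theta_2}>1$ in the final Gagliardo--Nirenberg step, all constants can be chosen consistently and the proof closes.
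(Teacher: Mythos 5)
Your proposal is correct and follows essentially the same route as the paper's proof: the same $I_1+I_2+I_3$ decomposition, the replacement of $\ep=1$ by a small $\ep$ in Lemma~\ref{lem_some_ineq} so that the attraction term carries the coefficient $\chi\alpha(1+\ep)$, absorption of $\int_\Omega u^{\sigma+p-2}w$ via Lemma~\ref{lemsub} and Young's inequality, and the sign condition $\chi\alpha-\xi\gamma<0$ to make the total coefficient of $\int_\Omega u^{\sigma+p-1}$ nonpositive before closing with the Gagliardo--Nirenberg/ODE argument. The only cosmetic difference is that the paper chooses $\ep_2$ to make the combined coefficient exactly zero rather than strictly negative, which changes nothing.
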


\begin{proof} 
Let $\sigma>1$ be sufficiently large. 
Let $\ep_1>0$ which will be fixed later. 
Proceeding similarly in the proof of Lemma~\ref{lem_Lsig_p<q}, 
we see that \eqref{I33} with $p=q$ holds, that is,
\begin{align*}
    \frac{1}{\sigma}\frac{d}{dt}\int_\Omega (u+1)^\sigma&\le 
      -\frac{4(\sigma-1)}{(\sigma+m-1)^2}
     \int_\Omega \big|\nabla(u+1)^{\frac{\sigma+m-1}{2}}\big|^2 \notag
\\
    &\quad\,
      +\frac{\chi\alpha(\sigma-1)}{\sigma+p-2}
            \Big((1+\ep_1)\int_\Omega u^{\sigma+p-1}+c_1(\ep_1)\Big) \notag
\\
    &\quad\,
      +\frac{\xi(\sigma-1)}{\sigma+p-2}
            \Big(2\delta\int_\Omega u^{\sigma+p-2}w
            +\delta c_2\int_\Omega w
                   -\gamma\int_\Omega u^{\sigma+p-1}\Big)
\end{align*}
for all $t \in (0,\tmax)$ with some $c_1(\ep_1), c_2>0$.  
Also, setting $c_3:=\frac{\sigma-1}{\sigma+p-2}$ and 
recalling the property \eqref{mass}, we have
\begin{align}\label{2DI2}
    &\frac{1}{\sigma}\frac{d}{dt}\int_\Omega (u+1)^\sigma
      +\frac{4(\sigma-1)}{(\sigma+m-1)^2}
     \int_\Omega \big|\nabla(u+1)^{\frac{\sigma+m-1}{2}}\big|^2 \notag 
\\ 
    &\quad\,\,\le
      \chi\alpha c_3
            \Big((1+\ep_1)\int_\Omega u^{\sigma+p-1}+c_1(\ep_1)\Big) \notag
\\
    &\qquad\,\,
      +\xi c_3
            \Big(2\delta\int_\Omega u^{\sigma+p-2}w
            +c_4
                   -\gamma\int_\Omega u^{\sigma+p-1}\Big),
\end{align}
for all $t \in (0,\tmax)$ with some $c_4>0$. 
We now take $\ep_2>0$ which will be fixed later. 
Then, an argument similar to that in derivation of \eqref{uwest*} implies
\begin{align*}
  \int_\Omega u^{\sigma+p-2}w 
	\le\frac{\ep_2}{2\xi\delta}\int_\Omega u^{\sigma+p-1}+c_5(\ep_2)
\end{align*}
with some $c_5(\ep_2)>0$. 
Thus we obtain
\begin{align}\label{2DI3}
    &\frac{1}{\sigma}\frac{d}{dt}\int_\Omega (u+1)^\sigma
      +\frac{4(\sigma-1)}{(\sigma+m-1)^2}
     \int_\Omega \big|\nabla(u+1)^{\frac{\sigma+m-1}{2}}\big|^2 \notag 
\\ 
    &\quad\,\,\le
      \chi\alpha c_3
            \Big((1+\ep_1)\int_\Omega u^{\sigma+p-1}+c_1(\ep_1)\Big) \notag
\\
    &\qquad\,\,
      +\xi c_3
            \Big(2\delta\int_\Omega u^{\sigma+p-2}w
            +c_4
                   -\gamma\int_\Omega u^{\sigma+p-1}\Big) \notag\\
&\quad\,\le 
c_3\Big[\chi\alpha(1+\ep_1)\int_\Omega u^{\sigma+p-1}
+2\xi\delta\Big(\frac{\ep_2}{2\xi\delta}\int_\Omega u^{\sigma+p-1}+c_5(\ep_2)\Big)
-\xi\gamma\int_\Omega u^{\sigma+p-1}\Big]\notag\\
&\qquad\,\,+c_6(\ep_1) \notag\\
&\quad\,=c_3\Big[\big(\chi\alpha(1+\ep_1)-\xi\gamma\big)+\ep_2\Big]\int_\Omega u^{\sigma+p-1}+c_7(\ep_1,\ep_2)
\end{align}
for all $t \in (0,\tmax)$ with some $c_6(\ep_1), c_7(\ep_1, \ep_2)>0$. 
Here since $\chi\alpha-\xi\gamma<0$ by assumption, we can pick $\ep_1>0$ 
satisfying $\chi\alpha(1+\ep_1)-\xi\gamma<0$. 
Then, taking
\begin{align*}
\ep_2:=\xi\gamma-\chi\alpha(1+\ep_1)>0, 
\end{align*}
we have from \eqref{2DI2} and \eqref{2DI3} that
\begin{align*}
\frac{1}{\sigma}\frac{d}{dt}\int_\Omega (u+1)^\sigma
      +\frac{4(\sigma-1)}{(\sigma+m-1)^2}
     \int_\Omega \big|\nabla(u+1)^{\frac{\sigma+m-1}{2}}\big|^2 \le c_7
\end{align*}
for all $t \in (0,\tmax)$. 
Finally, deriving a differential inequality for $\int_\Omega (u+1)^\sigma$ 
by an argument similar to that in the proof of Lemma~\ref{lem_Lsig_p<q}, 
we arrive at the conclusion. 
\end{proof}

Employing Lemma~\ref{lem_Lsig_p=q}, we can prove Theorem~\ref{thm_bdd2}.

\begin{prth3.3}
In view of Lemmas~\ref{lem_Lsig_p=q} and~\ref{lem_Lsig_Linf} along with the criterion \eqref{bc}, we immediately arrive at
the conclusion of Theorem~\ref{thm_bdd2}. \qed
\end{prth3.3}


\section{Finite-time blow-up} 
\label{Sec4}

In the following we suppose that $\Omega=B_R(0) \subset \R^n$ 
($n \in \N$, $n \ge 3$) with $R>0$ and 
$f(u)=\lambda(|x|)u-\mu(|x|)u^\kappa$ ($\kappa \ge 1$), 
where $\lambda, \mu$ satisfy the conditions \eqref{lammu} and \eqref{mupro} as well as 
$m>0$, $p, q \in \R$, $\chi, \xi, \alpha, \beta, \gamma, \delta>0$. 
We also assume that $u_0$ is radially symmetric and fulfills \eqref{u0}. 
Then we denote by $(u, v, w)=(u(r,t), v(r,t), w(r,t))$ the local classical solution of \eqref{prob} 
given in Lemma~\ref{lem_local} and by $\tmax \in (0, \infty]$ its maximal existence time. 

In order to state the main theorems we give the conditions 
\ref{C1}--\ref{C3} as follows:
         \begin{align}
         &\begin{cases}
         n \in \{3,4\};\quad m \ge 1,\quad 
         p<\dfrac{2}{n+1}m+\dfrac{2(n^2+1)}{n(n+1)}, 
         \notag
      \\[4mm]
         p<-\dfrac{1}{n-2}m+\dfrac{2(n^2-n-1)}{n(n-2)},\quad 
         m-p<-\dfrac{2}{n};
         \tag*{({\bf C1})}\label{C1}
         \end{cases}\\[6mm]
         &\begin{cases}
         n \ge 5;\quad m \ge 1,\quad
         -\dfrac{2}{n-3}m+\dfrac{2(n^2-2n-1)}{n(n-3)}
         <p<\dfrac{2}{n+1}m+\dfrac{2(n^2+1)}{n(n+1)},
         \notag
      \\[4mm]
         p<-\dfrac{n+2}{n-4}m+\dfrac{3n^2-5n-4}{n(n-4)},\quad 
         p\le \dfrac{n+2}{3}m-\dfrac{n^2-3n-4}{3n};
         \tag*{({\bf C2})}\label{C2}
         \end{cases}
      \\[6mm]
         &\begin{cases}
         n \ge 5;\quad m \ge 1,\quad
         -\dfrac{2}{n-3}m+\dfrac{2(n^2-2n-1)}{n(n-3)}
         <p<\dfrac{2}{n+1}m+\dfrac{2(n^2+1)}{n(n+1)},
         \notag
      \\[4mm]
         -\dfrac{n+2}{n-4}m+\dfrac{3n^2-5n-4}{n(n-4)} 
         \le p<-\dfrac{1}{n-2}m+\dfrac{2(n^2-n-1)}{n(n-2)},
      \\[4mm]
         m-p<-\dfrac{2}{n}.
         \tag*{({\bf C3})}\label{C3}
         \end{cases}
         \end{align}

%

%


\subsection{The case \boldmath{$p>q$}}
\label{Subsec4.1}

In this subsection we establish finite-time blow-up
in \eqref{prob} in the case $p>q$.

\begin{thm}\label{thm_bu1}
Assume that $p>q$. 
Also, suppose that $m>0$, $\kappa \ge 1$ fulfill the following conditions\/{\rm :}
\begin{itemize}
\item[{\rm (i)}] In the case {\rm \ref{C1}}, 
\begin{align*}
\kappa<1+\frac{(n-2)\big((m-p+1)n+1\big)}{n(n-1)}+\frac{a\big((m-p+1)n+1\big)}{n(n-1)}-(m-1)-(2-p)_+;
\end{align*}
\item[{\rm (ii)}] In the case {\rm \ref{C2}}, 
\begin{align*}
\kappa<1+\frac{(n-2)\big((m-p+1)n+1\big)}{n(n-1)}+\frac{a\big((m-p+1)n+1\big)}{n(n-1)}-(m-1)-(2-p)_+;
\end{align*}
\item[{\rm (iii)}] In the case {\rm \ref{C3}}, 
\begin{align*}
\kappa<1+\frac{(m-p+1)n+1}{2(n-1)}+\frac{a\big((m-p+1)n+1\big)}{n(n-1)}-\frac{(2-p)_+}{2},
\end{align*}
\end{itemize}
where $a\ge0$ is given in \eqref{mupro} and 
$y_+:=\max\{0, y\}$. 
Let $M_0>0$, $M_1 \in (0, M_0)$ and $L>0$. 
Then one can find $\ep_0>0$ and $r_1 \in (0, R)$ with the following property\/{\rm :} 
If $u_0$ satisfies 
$u_0(x) \le L|x|^{-\sigma}$, where 
$\sigma=\frac{n(n-1)}{(m-p+1)n+1}+\ep_0$ as well as 
$\int_\Omega u_0=M_0$ and $\int_{B_{r_1}(0)}u_0 \ge M_1$, 
then the solution $(u, v, w)$ to \eqref{prob} blows up at $t=T^* \in (0, \infty)$ 
in the sense that 
\begin{align*}
\lim_{t \nearrow T^*}\|u(\cdot, t)\|_{L^\infty(\Omega)}=\infty.
\end{align*}
\end{thm}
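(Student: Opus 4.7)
The plan is to follow the moment-functional strategy pioneered in~\cite{W-2018} and adapted to the attraction-repulsion setting in~\cite{CMTY, L-2021} and to quasilinear diffusion/sensitivities in~\cite{T-2021}. First I recast the system radially and encode it in a single scalar PDE for the mass accumulation function
\begin{align*}
U(s,t):=\int_0^{s^{1/n}}\rho^{n-1}u(\rho,t)\,d\rho,\qquad s=r^n,
\end{align*}
so that $u=nU_s$. Integrating the two elliptic equations over $B_{s^{1/n}}(0)$ gives explicit expressions for $v_r$ and $w_r$ in terms of $U$ together with lower-order $\beta$- and $\delta$-remainders, and substituting these into the integrated first equation produces an equation of the schematic form
\begin{align*}
U_t &= n^2 s^{2-\frac{2}{n}}(nU_s+1)^{m-1}U_{ss}\\
&\quad + n(nU_s)(nU_s+1)^{p-2}\bigl(\chi\alpha U-\chi\beta V\bigr)\\
&\quad - n(nU_s)(nU_s+1)^{q-2}\bigl(\xi\gamma U-\xi\delta W\bigr)+F[u],
\end{align*}
where $V,W$ are the primitives of $v,w$ and $F[u]$ carries the logistic contribution.

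Next I introduce the moment-type functional
\begin{align*}
\phi(s_0,t):=\int_0^{s_0}s^{-b}(s_0-s)U(s,t)\,ds,\qquad b\in(0,1),
\end{align*}
with $b$ chosen in terms of $m,p,n$, and differentiate in $t$. The quasilinear diffusion contribution is estimated from above by integration by parts combined with the pointwise upper bound $u(x,t)\le L|x|^{-\sigma}$, obtained from the choice $\sigma=\tfrac{n(n-1)}{(m-p+1)n+1}+\ep_0$ and propagated in time by a comparison argument against a suitable singular supersolution. The attraction contribution, after a second integration by parts, produces the key lower bound
\begin{align*}
c\chi\alpha\int_0^{s_0}s^{-b}(s_0-s)UU_s(nU_s+1)^{p-2}\,ds\ge c_3\, s_0^{-\theta_2}\phi(s_0,t)^2,
\end{align*}
obtained by elementary rearrangement; this is the sole source of superquadratic growth. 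The repulsion term has the opposite sign but, since $q<p$, the factor $(nU_s+1)^{q-2}$ is dominated by $(nU_s+1)^{p-2}$ when $nU_s$ is large, so Young's inequality together with the $|x|^{-\sigma}$-bound absorbs it into a fraction of the attraction term up to a remainder $-c_4 s_0^{\theta_3}$. The logistic term $-n\int_0^{s^{1/n}}\rho^{n-1}\mu(\rho)u^\kappa\,d\rho$ is handled using \eqref{mupro} and Young's inequality, and the explicit thresholds on $\kappa$ in (i)--(iii) are exactly the conditions under which the resulting exponent can be matched with the attraction contribution, with the residue being absorbed into $c_4 s_0^{\theta_3}$.

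Combining these estimates yields the pointwise ODI
\begin{align*}
\frac{\pa\phi}{\pa t}(s_0,t)\ge c_3\, s_0^{-\theta_2}\phi(s_0,t)^2-c_4\, s_0^{\theta_3}
\end{align*}
on $(0,\hat s_0)\times(0,\tmax)$. The mass-concentration assumption $\int_{B_{r_1}(0)}u_0\ge M_1$ yields a lower bound on $\phi(s_0,0)$ which, for suitably small admissible $s_0$, ensures $c_3 s_0^{-\theta_2}\phi(s_0,0)^2>2c_4 s_0^{\theta_3}$; a standard ODE comparison then forces $\tmax<\infty$, and the criterion~\eqref{bc} from Lemma~\ref{lem_local} delivers the $L^\infty$-blow-up.

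The main obstacle will be the simultaneous balancing of the repulsion and logistic contributions against the attraction. The three disjoint parameter regimes~\ref{C1}--\ref{C3} are precisely the configurations in which the exponents produced by the Young inequalities used to absorb these two terms can be reconciled with the attraction-driven $\phi^2$ lower bound, and the explicit $\kappa$-threshold in (i)--(iii) arises from a further such balance involving the decay rate $a\ge0$ from~\eqref{mupro}. A secondary technical difficulty is the time-propagation of the pointwise estimate $u(x,t)\le L|x|^{-\sigma}$, which is indispensable in handling both the quasilinear diffusion and the repulsion terms but does not follow from Lemma~\ref{lem_local} alone and must be established via a separate barrier argument.
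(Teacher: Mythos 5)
Your proposal is correct and follows essentially the same route as the paper: the same mass-accumulation function and moment functional $\phi$, absorption of the repulsion term into a small multiple of the attraction term via Young's inequality (exploiting $q<p$, with the constant remainder contributing only a $\phi$-linear term), the same bookkeeping of exponents under \ref{C1}--\ref{C3} borrowed from \cite{T-2021}, and an ODE comparison for the resulting superquadratic differential inequality combined with the extensibility criterion \eqref{bc}. The one step you leave open---propagating the pointwise bound $u(x,t)\le C|x|^{-\sigma}$ in time---is handled in the paper (Lemma~\ref{lem_profile}) by writing the combined drift as $S_1\bigl(\nabla v+\tfrac{S_2}{S_1}\nabla w\bigr)$, observing $|S_2/S_1|\le \xi/\chi$ precisely because $p\ge q$, and invoking the blow-up profile theorem of \cite{F-2020}, which is the comparison-type argument you anticipate.
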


We first show the following lemma giving the profile of $u$, in which we include the case $p=q$ toward the next subsection. 

\begin{lem}\label{lem_profile}
Assume that $p \ge q$. 
Also, suppose that $m>0$ and $p>1$ fulfill
\begin{align*}
m \ge 1,\quad m-p\in \Big(-1-\frac{1}{n},\ -\frac{2}{n}\Big].
\end{align*}
Let $M_0>0$, $L>0$ and $T>0$. 
Let $\ep>0$ and set $\sigma:=\frac{n(n-1)}{(m-p+1)n+1}+\ep$. 
Then there exists $C>0$ such that the following property holds\/{\rm :} 
If $u_0$ satisfies $\int_\Omega u_0=M_0$ and
\begin{align*}
u_0(x) \le L|x|^{-\sigma}
\end{align*}
for all $x \in \Omega$, then the classical solution $(u, v, w) \in \big(C^0(\cl{\Omega} \times [0,T)) \cap C^{2,1}(\cl{\Omega} \times (0,T))\big)^3$ of \eqref{prob} has the estimate  
\begin{align}\label{profile2}
u(x,t) \le C|x|^{-\sigma}
\end{align}
for all $x \in \Omega$ and all $t \in (0,T)$. 
\end{lem}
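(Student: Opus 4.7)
The strategy is to propagate the pointwise profile bound $u_0\le L|x|^{-\sigma}$ from $t=0$ to all $t\in[0,T)$ by a first-contact comparison argument with an explicit radial barrier, using the integral representations of $v_r,w_r$ coming from the elliptic equations. Since $u_0$ is radial, so is $(u,v,w)$, and integrating the elliptic equations yields
\begin{align*}
r^{n-1}v_r(r,t)=\int_0^r\rho^{n-1}(\beta v-\alpha u)(\rho,t)\,d\rho,\qquad
r^{n-1}w_r(r,t)=\int_0^r\rho^{n-1}(\delta w-\gamma u)(\rho,t)\,d\rho.
\end{align*}
Integrating the first equation of \eqref{prob} over $\Omega$ together with $\mu\ge 0$ gives $\frac{d}{dt}\int_\Omega u\le\|\lambda\|_{L^\infty}\int_\Omega u$, so $M(t):=\int_\Omega u(\cdot,t)\le M_0e^{\|\lambda\|_{L^\infty}T}$ on $[0,T)$; coupled with the elliptic identities $\int_\Omega v=(\alpha/\beta)M$ and $\int_\Omega w=(\gamma/\delta)M$, this produces $|v_r(r,t)|+|w_r(r,t)|\le C_0\,r^{1-n}$ uniformly on $(0,R]\times[0,T)$.

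I then construct a barrier $\overline u(r,t):=K(t)(r^{-\sigma}+1)$, where $K:[0,T)\to(0,\infty)$ is locally Lipschitz, with $K(0)$ large enough that $\overline u(\cdot,0)\ge u_0$ pointwise and that $\overline u(R,t)\ge\|u\|_{L^\infty(\overline\Omega\times[0,T])}$ on $[0,T]$ (excluding boundary contact by continuity of $u$ on $\overline\Omega\times[0,T]$). Inserting $\overline u$ into the first equation of \eqref{prob} and using the bounds of the previous step for the actual $v_r,w_r$, each term splits into an explicit scaling in $K$ and $r$: the nonlinear diffusion contributes a quantity of order $K^m r^{-\sigma m-2}$, the attraction drift at most $K^{p-1}r^{-\sigma(p-1)-n}$, the repulsion drift at most $K^{q-1}r^{-\sigma(q-1)-n}$, and the logistic source contributes at most $K^\kappa r^{-\sigma\kappa}$ when $\overline u$ is large. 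The value $\sigma=n(n-1)/((m-p+1)n+1)+\epsilon$, which under $m-p\le-2/n$ satisfies $\sigma\ge n+\epsilon$, is the minimal exponent at which the resulting supersolution inequality $\overline u_t>[\text{RHS applied to }\overline u]$ reduces to a differential inequality for $K(t)$ that admits a solution bounded on $[0,T]$.

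Finally, I execute the first-contact argument: define
\begin{align*}
t_\star:=\sup\bigl\{t\in[0,T):\,u(r,s)\le\overline u(r,s)\;\forall r\in(0,R],\;\forall s\in[0,t]\bigr\}.
\end{align*}
Supposing $t_\star<T$, by continuity there exists $r_\star\in(0,R)$ with $u(r_\star,t_\star)=\overline u(r_\star,t_\star)$; at this point, $u_r(r_\star,t_\star)=\overline u_r(r_\star,t_\star)$ and $u_{rr}(r_\star,t_\star)\le\overline u_{rr}(r_\star,t_\star)$, and, since the drift contributions to the equation depend only on the pointwise value of $u$ and on the actual $v_r,w_r$, they coincide at $r_\star$ whether computed with $u$ or with $\overline u$. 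The strict supersolution property from the previous step then forces $u_t(r_\star,t_\star)<\overline u_t(r_\star,t_\star)$, contradicting the first-contact inequality $u_t(r_\star,t_\star)\ge\overline u_t(r_\star,t_\star)$. Hence $t_\star=T$, and the conclusion follows with $C:=\bigl(\sup_{[0,T]}K(t)\bigr)(1+R^{-\sigma})$.

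\textbf{Main obstacle.} The delicate point is the exponent bookkeeping in the construction of $\overline u$. Since $\sigma>n-2$ (indeed $\sigma\ge n+\epsilon$ in this regime), the Laplacian $\Delta\bigl(\overline u^m\bigr)$ is \emph{positive}, so the nonlinear diffusion acts as a source rather than as a sink in the equation for $\overline u$, and the differential inequality for $K$ therefore picks up this diffusive source; its solvability on $[0,T]$ relies critically on the precise scaling of the drift bounds from the first step. The value $\sigma=n(n-1)/((m-p+1)n+1)+\epsilon$ is calibrated so that, under $m-p\le-2/n$, the scalings of diffusion, attraction, repulsion and logistic source are balanced in such a way that an ODE of the form $K'(t)\le\Lambda K(t)+\text{lower order}$ can be extracted. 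Tracking the lower-order contributions — from the $(u+1)^{m-1}$ vs $u^{m-1}$ correction, from the $\beta v,\delta w$ terms inside the integrals representing $v_r,w_r$, and from $f(u)$ for moderate values of $\overline u$ — together with verifying the correct sign of each drift contribution, constitutes the bulk of the technical work.
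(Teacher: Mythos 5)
Your first step --- the pointwise bound $|v_r|+|w_r|\le C_0 r^{1-n}$ obtained from the mass estimate and the integral representation of the elliptic equations --- is sound, and it is essentially the same input the paper uses (there it appears as the weighted bound on $\int_\Omega |x|^{(n-1)\theta}|\mathbf{f}|^\theta$ after the two drifts are merged into a single flux using $p\ge q$). The gap is in the comparison step: the barrier $\overline u=K(t)(r^{-\sigma}+1)$ is \emph{not} a supersolution near $r=0$, and no locally bounded $K$ can make it one. To evaluate the operator at a contact point you must expand the divergence of the attraction flux, and besides the gradient term you listed this produces the zeroth-order reaction $-\chi u(u+1)^{p-2}\Delta v=\chi u(u+1)^{p-2}(\alpha u-\beta v)$, whose positive part is of size $K^p r^{-\sigma p}$ on the barrier. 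Under the standing hypotheses $m-p\in(-1-\tfrac1n,-\tfrac2n]$ one has $0<(m-p+1)n+1\le n-1$, hence $\sigma\ge n+\ep$, and therefore $\sigma p>\sigma(p-1)+n$, $\sigma p>\sigma m+2$, and $\sigma p>\sigma$: this omitted term is strictly more singular as $r\to0$ than the diffusive source $K^m r^{-\sigma m-2}$, than the gradient-drift term $K^{p-1}r^{-\sigma(p-1)-n}$ you did record, and above all than $\overline u_t\sim K'(t)r^{-\sigma}$. (Even the terms you did record are supercritical; e.g.\ for $p\ge2$ one has $\sigma(p-1)+n\ge\sigma+n>\sigma$.) The repulsion term cannot absorb it, since $q\le p$ only yields $-\xi\gamma K^q r^{-\sigma q}$ with $\sigma q\le\sigma p$ and no sign condition on $\chi\alpha-\xi\gamma$ is assumed. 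Consequently the required inequality $K'(t)r^{-\sigma}\gtrsim K^p r^{-\sigma p}+\cdots$ fails for small $r$ however $K$ is chosen; no ODE of the form $K'\le\Lambda K+\text{l.o.t.}$ can be extracted, and the claimed ``calibration'' of $\sigma$ does not balance the exponents.

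This is precisely why the paper does not attempt a pointwise barrier. Its proof rescales $\widetilde u=e^{-\lambda_1 t}u$ to absorb $\lambda u$, drops $-\mu u^\kappa\le0$, rewrites the two drifts as a single flux $S_1(x,t,\widetilde u)\,\mathbf f$ with $|S_2/S_1|\le\xi/\chi$ (this is where $p\ge q$ enters), verifies the growth conditions on $D$, $S_1$ and the weighted $L^\theta$ bound on $|x|^{n-1}\mathbf f$, and then invokes \cite[Theorem~1.1]{F-2020}. There the exponent $\sigma=\frac{n(n-1)}{(m-p+1)n+1}+\ep$ emerges from an integrated, weighted iteration rather than a pointwise balance, which is how the supercritical reaction that defeats your barrier is handled. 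A secondary flaw: your exclusion of boundary contact presupposes $\|u\|_{L^\infty(\cl{\Omega}\times[0,T])}<\infty$, which is not available (the lemma is applied up to times where $u$ may become unbounded); one would instead need the no-flux condition and a Hopf-type argument at $r=R$.
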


\begin{proof}
By the condition for the function $\lambda$ (see \eqref{lammu}), 
we see that there exists $\lambda_1>0$ 
such that $\lambda(|x|) \le \lambda_1$ for all $x \in \Omega$. 
We next set
\begin{align*}
\widetilde{u}(x,t)&:=e^{-\lambda_1t}u(x,t),\quad 
D(x,t,\rho):=(e^{\lambda_1t}\rho+1)^{m-1},\\
S_1(x,t,\rho)&:=-\chi(e^{\lambda_1t}\rho+1)^{p-2}\rho,\quad
S_2(x,t,\rho):=\xi(e^{\lambda_1t}\rho+1)^{q-2}\rho
\end{align*}
for $x \in \Omega$, $t \in (0,T)$ and $\rho>0$. 
Since $S_1(\cdot, \cdot, \cdot)<0$ on $\Omega \times (0,T) \times (0,\infty)$, we have 
\begin{align*}
S_1(x,t,\rho)\nabla v(x,t)+S_2(x,t,\rho)\nabla w(x,t)
&=S_1(x,t,\rho)\Big[\nabla v(x,t)+\frac{S_2(x,t,\rho)}{S_1(x,t,\rho)}\nabla w(x,t)\Big]
\end{align*}
for all $x \in \Omega$, $t \in (0,T)$ and all $\rho>0$. 
Putting
\begin{align*}
\mathbf{f}(x,t):=\nabla v(x,t)+\frac{S_2(x,t,\rho)}{S_1(x,t,\rho)}\nabla w(x,t),
\end{align*}
we obtain from \eqref{prob} that
\begin{align}\label{P}
\begin{cases}
\widetilde{u}_t \le \nabla \cdot (D(x,t,\widetilde{u})\nabla\widetilde{u}
+S_1(x,t,\widetilde{u})\,\mathbf{f}(x,t))
& {\rm in}\ \Omega \times (0,T),\\
(D(x,t,\widetilde{u})\nabla\widetilde{u}
+S_1(x,t,\widetilde{u})\,\mathbf{f}(x,t)) \cdot \nu=0 
& {\rm on}\ \pa\Omega \times (0,T),\\
\widetilde{u}(\cdot,0)=u_0 & {\rm in}\ \Omega.
\end{cases}
\end{align}
Also, it can be checked that for all $x \in \Omega$, $t \in (0,T)$ and all $\rho>0$, 
\begin{align*}
D(x,t,\rho)&\ge \rho^{m-1},\\
D(x,t,\rho)&\le (e^{\lambda_1T}\rho+1)^{m-1} 
                \le (e^{\lambda_1T}+1)^{m-1}\max\{\rho, 1\}^{m-1},\\
|S_1(x,t,\rho)| &\le \chi(e^{\lambda_1T}+1)^{p-1}\max\{\rho, 1\}^{p-1}.
\end{align*}
Moreover, the initial condition in \eqref{P} implies that 
$\int_\Omega \widetilde{u}(\cdot, 0)=\int_\Omega u_0=M_0$. 
Here we choose $\theta>n$ satisfying
\begin{align*}
m-p \in \Big(\frac{1}{\theta}-1-\frac{1}{n},\ \frac{1}{\theta}-\frac{2}{n}\Big]
\end{align*}
and
\begin{align*}
\sigma&=\frac{n(n-1)}{(m-p+1)n+1}+\ep\\
&>\frac{n(n-1)}{(m-p+1)n+1-\frac{n}{\theta}}
=\frac{n-1}{(m-p)+1+\frac{1}{n}-\frac{1}{\theta}}.
\end{align*}
Since $p \ge q$ and 
\begin{align*}
\Big|\frac{S_2(x,t,\rho)}{S_1(x,t,\rho)}\Big|
&=\frac{\xi(e^{\lambda_1t}\rho+1)^{q-2}\rho}{\chi(e^{\lambda_1t}\rho+1)^{p-2}\rho}
=\frac{\xi}{\chi}(e^{\lambda_1t}\rho+1)^{q-p}
\le \frac{\xi}{\chi},
\end{align*}
for all $x \in \Omega$, $t \in (0,T)$ and all $\rho>0$, 
following the steps in the proof of \cite[Lemma~5.2]{BFL-2021}, 
we establish
\begin{align*}
\int_\Omega |x|^{(n-1)\theta}|\mathbf{f}(x,t)|^\theta\,dx
&\le c_1\Big(\frac{\alpha}{\beta}+\frac{\xi}{\chi}\cdot\frac{\gamma}{\delta}\Big)^\theta\Big(\frac{2e^{\lambda_1T}M_0}{\omega_{n-1}}\Big)^\theta |\Omega|
\end{align*}
for all $t \in (0,T)$ with some $c_1>0$, where $\omega_{n-1}$ denotes the $(n-2)$-dimensional surface area of the unit sphere in $\R^{n-1}$. 
Thanks to \cite[Theorem~1.1]{F-2020}, we derive that there exists $c_2>0$ such that
$\widetilde{u}(x,t) \le c_2|x|^{-\sigma}$ for all $x \in \Omega$ and all $t \in (0,T)$. 
This leads to the end of the proof.
\end{proof}

We now introduce the mass accumulation functions $U=U(s, t), V=V(s,t)$ and $
W=W(s,t)$ 
as follows:
\begin{align}
U(s,t)&:=\int_0^{s^{\frac1n}} \rho^{n-1}u(\rho, t)\,d\rho,\label{Udef}\\
V(s,t)&:=\int_0^{s^{\frac1n}} \rho^{n-1}v(\rho, t)\,d\rho\\
\intertext{and}
W(s,t)&:=\int_0^{s^{\frac1n}} \rho^{n-1}w(\rho, t)\,d\rho, \label{Wdef}
\end{align}
where $s:=r^n$ for $r \in [0, R]$ and $t \in [0, \tmax)$. 
We next define the moment-type functional 
\begin{align}\label{phi}
\phi(s_0, t):=\int_0^{s_0}s^{-b}(s_0-s)U(s,t)\,ds
\end{align}
for $s_0 \in (0, R^n)$, $t \in [0, \tmax)$ and $b \in (0,1)$. 

\begin{lem}\label{lem_DI_p>q}
Assume that $p>q$. 
Let $\mu_1>0$, $\kappa \ge 1$, 
$a \ge 0$ and $T>0$. 
Then there exist $C_1, C_2>0$ such that
\begin{align}\label{phiDI}
\frac{\pa \phi}{\pa t}(s_0, t) \ge &\ 
C_1\int_0^{s_0}s^{-b}(s_0-s)(nU_s(s,t)+1)^{p-2}U(s,t)U_s(s,t)\,ds \notag\\
&+n^2\int_0^{s_0}s^{2-\frac2n-b}(s_0-s)(nU_s(s,t)+1)^{m-1}U_{ss}(s,t)\,ds \notag\\
&-\chi\beta n\int_0^{s_0}s^{-b}(s_0-s)(nU_s(s,t)+1)^{p-2}V(s,t)U_s(s,t)\,ds \notag\\
&-n^{\kappa-1}\mu_1\int_0^{s_0}s^{-b}(s_0-s)\Big[\int_0^{s_0}\eta^{\frac an}U_s^\kappa(\eta,t)\,d\eta\Big]\,ds-C_2\phi(s_0,t)
\end{align}
for all $s_0 \in (0, R^n)$ and all $t \in (0, \min\{T, \tmax\})$. 
\end{lem}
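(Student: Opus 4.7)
First I would reduce the first equation of \eqref{prob} to a scalar evolution for $U$. The change of variable $s=r^n$ gives $u(r,t)=nU_s(s,t)$ and $r^{n-1}u_r=n^2s^{2-\frac{2}{n}}U_{ss}$, while integrating the elliptic equations in \eqref{prob} over $B_r(0)$ yields $r^{n-1}v_r=\beta V-\alpha U$ and $r^{n-1}w_r=\delta W-\gamma U$. Multiplying the first PDE by $r^{n-1}$ and integrating from $0$ to $s^{1/n}$ therefore produces
\begin{align*}
U_t=&\,n^2s^{2-\frac{2}{n}}(nU_s+1)^{m-1}U_{ss}+\chi\alpha n\,UU_s(nU_s+1)^{p-2}-\chi\beta n\,VU_s(nU_s+1)^{p-2}\\
&+\xi\delta n\,WU_s(nU_s+1)^{q-2}-\xi\gamma n\,UU_s(nU_s+1)^{q-2}+F(s,t),
\end{align*}
with $F(s,t):=\int_0^{s^{1/n}}\rho^{n-1}f(u(\rho,t))\,d\rho$.

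Differentiating \eqref{phi} under the integral then gives $\phi_t(s_0,t)=\int_0^{s_0}s^{-b}(s_0-s)U_t(s,t)\,ds$, and I would match the six contributions to those on the right-hand side of \eqref{phiDI}. The diffusion summand and $-\chi\beta n\int_0^{s_0}s^{-b}(s_0-s)VU_s(nU_s+1)^{p-2}\,ds$ appear verbatim. The term $\xi\delta n\,WU_s(nU_s+1)^{q-2}$ is pointwise nonnegative (since $U_s,W\ge 0$), so it may be discarded from below. To produce the first summand of \eqref{phiDI}, I would split $\chi\alpha n=C_1+(\chi\alpha n-C_1)$ with $C_1:=\chi\alpha n/2$ and pair the remainder with $-\xi\gamma n\,UU_s(nU_s+1)^{q-2}$, giving
\[
n\,UU_s(nU_s+1)^{q-2}\Bigl[\tfrac{\chi\alpha}{2}(nU_s+1)^{p-q}-\xi\gamma\Bigr].
\]

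This is where the strict inequality $p>q$ enters decisively. Setting $K:=(4\xi\gamma/(\chi\alpha))^{1/(p-q)}$, which is finite exactly because $p-q>0$, the bracket is nonnegative on $\{nU_s+1\ge K\}$, so the product is $\ge 0$ there. On the complementary region $\{nU_s+1<K\}$ one has the bracket $\ge-\xi\gamma$, $nU_s<K$, and $(nU_s+1)^{q-2}\le\max\{K^{q-2},1\}$, so the product is pointwise bounded below by $-C'U(s,t)$ for a constant $C'>0$ depending only on $\chi,\alpha,\xi,\gamma,p,q,n$. Multiplying by $s^{-b}(s_0-s)$ and integrating in $s$ then supplies precisely the $-C_2\phi(s_0,t)$ term of \eqref{phiDI}, with $C_2=C'$.

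For the reaction contribution I would discard the nonnegative part $\int_0^{s^{1/n}}\rho^{n-1}\lambda(\rho)u\,d\rho$ and apply $\mu(\rho)\le\mu_1\rho^a$ from \eqref{mupro}. The substitution $\eta=\rho^n$, under which $u^\kappa=n^\kappa U_s^\kappa$ and $\rho^{n-1+a}\,d\rho=n^{-1}\eta^{a/n}\,d\eta$, produces
\[
F(s,t)\ge -\mu_1 n^{\kappa-1}\int_0^s\eta^{a/n}U_s^\kappa(\eta,t)\,d\eta\ge -\mu_1 n^{\kappa-1}\int_0^{s_0}\eta^{a/n}U_s^\kappa(\eta,t)\,d\eta,
\]
and multiplying by $s^{-b}(s_0-s)$ followed by integration in $s$ yields exactly the fourth summand of \eqref{phiDI}. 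The main obstacle is the factorisation/case-split above; once the pointwise lower bound $\ge-C'U$ is established, the rest is bookkeeping of the six resulting integrals.
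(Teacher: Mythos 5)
Your proposal is correct and follows essentially the same route as the paper: derive the equation for $U$ via $s=r^n$ and the integrated elliptic equations, test against the kernel $s^{-b}(s_0-s)$, discard the nonnegative $W$-term, handle the logistic term via \eqref{mupro} and the substitution $\eta=\rho^n$, and absorb the repulsion term into half of the attraction term plus a multiple of $\phi$, which is exactly where $p>q$ enters. The only deviation is in the pointwise absorption step: the paper proves $(nU_s+1)^{q-2}U_s\le \ep_1 (nU_s+1)^{p-2}U_s+c_1(\ep_1)$ via Young's inequality (with a separate trivial case for $q\le 1$), whereas you use an explicit dichotomy on whether $nU_s+1$ exceeds $K=(4\xi\gamma/(\chi\alpha))^{1/(p-q)}$ --- an equivalent, slightly more uniform way to obtain the same bound and the same constants $C_1=\chi\alpha n/2$ and $C_2$.
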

\begin{proof}
The first equation in \eqref{prob} implies that  
$u=u(r,t)$, $v=v(r,t)$, $w=w(r,t)$ 
satisfy
\begin{align}\label{ueq}
u_t
&=\frac{1}{r^{n-1}}\big((u+1)^{m-1}r^{n-1}u_r\big)_r
    -\chi \frac{1}{r^{n-1}}\big(u(u+1)^{p-2}r^{n-1}v_r\big)_r\notag\\
&\quad\,
    +\xi \frac{1}{r^{n-1}}\big(u(u+1)^{q-2}r^{n-1}w_r\big)_r+\lambda u
  -\mu u^\kappa.
\end{align}
Moreover, the second and third equations in \eqref{prob} yield that 
\begin{align}
r^{n-1}v_r(r,t)&=\beta V(r^n, t)-\alpha U(r^n,t), \label{Veq}\\
r^{n-1}w_r(r,t)&=\delta W(r^n, t)-\gamma U(r^n,t) \label{Weq}
\end{align}
for all $r \in (0,R)$ and all $t \in (0,\tmax)$. 
Integrating \eqref{ueq} combined with \eqref{Veq} and \eqref{Weq} with respect to $r$ over $[0, s^{\frac1n}]$, 
we see from the nonnegativity of $\lambda$  and \eqref{mupro} that 
\begin{align}\label{UDI}
U_t \ge&\ n^2s^{2-\frac 2n}(nU_s+1)^{m-1}U_{ss}\notag\\
&+\chi nU_s(nU_s+1)^{p-2}(\alpha U-\beta V)-\xi nU_s(nU_s+1)^{q-2}(\gamma U-\delta W)\notag\\
&-n^{\kappa -1}\mu_1\int_0^s\eta^{\frac an}U_s^\kappa(\eta,t)\,d\eta
\end{align}
for all $s \in (0,R^n)$ and all $t \in (0,\tmax)$. 
Combining \eqref{phi} and \eqref{UDI}, we obtain 
\begin{align}\label{phiDI2}
\frac{\pa \phi}{\pa t}(s_0, t) &\ge
\chi\alpha n\int_0^{s_0}s^{-b}(s_0-s)(nU_s(s,t)+1)^{p-2}U(s,t)U_s(s,t)\,ds \notag\\
&\quad\,-\xi\gamma n\int_0^{s_0}s^{-b}(s_0-s)(nU_s(s,t)+1)^{q-2}U(s,t)U_s(s,t)\,ds \notag\\
&\quad\,+n^2\int_0^{s_0}s^{2-\frac2n-b}(s_0-s)(nU_s(s,t)+1)^{m-1}U_{ss}(s,t)\,ds \notag\\
&\quad\,-\chi\beta n\int_0^{s_0}s^{-b}(s_0-s)(nU_s(s,t)+1)^{p-2}V(s,t)U_s(s,t)\,ds \notag\\
&\quad\,+\xi\delta n\int_0^{s_0}s^{-b}(s_0-s)(nU_s(s,t)+1)^{q-2}W(s,t)U_s(s,t)\,ds \notag\\
&\quad\,-n^{\kappa-1}\mu_1\int_0^{s_0}s^{-b}(s_0-s)\Big[\int_0^{s_0}\eta^{\frac an}U_s^\kappa(\eta,t)\,d\eta\Big]\,ds\notag\\
&=: J_1-J_2+J_3-J_4+J_5-J_6
\end{align}
for all $s_0 \in (0, R^n)$ and all $t \in (0, \min\{T, \tmax\})$. 
Here we estimate the term $J_2$. 
We first consider the case $q>1$. 
In this case, 
using Young's inequality, we see that 
for all $\ep_1>0$ there exists $c_1(\ep_1)>0$ such that
\begin{align}\label{key1}
(nU_s(s,t)+1)^{q-2}U_s(s,t) 
&\le \ep_1\Big[(nU_s(s,t)+1)^{(q-1)-1}U_s(s,t) \Big]^{\frac{p-1}{q-1}}+c_1(\ep_1) \notag\\
&=\ep_1(nU_s(s,t)+1)^{p-1-\frac{p-1}{q-1}}U_s^{\frac{p-1}{q-1}}(s,t) +c_1(\ep_1).
\end{align}
Here we notice from the relation 
$\frac{p-1}{q-1}>1$ by $p>q>1$ that 
\begin{align}\label{key2}
U_s^{\frac{p-1}{q-1}}(s,t)
=U_s^{\frac{p-1}{q-1}-1}(s,t)U_s(s,t) \le (nU_s(s,t)+1)^{\frac{p-1}{q-1}-1}U_s(s,t).
\end{align}
A combination of \eqref{key1} and \eqref{key2} implies that 
\begin{align}\label{J2est3}
(nU_s(s,t)+1)^{q-2}U_s(s,t)&\le\ep_1(nU_s(s,t)+1)^{p-2}U_s(s,t)+c_1(\ep_1).
\end{align}
In the case $q\le 1$, noting that 
\begin{align*} 
(nU_s(s,t)+1)^{q-2} U_s(s,t)
\le (nU_s(s,t)+1)^{-1}U_s(s,t)
\le n^{-1},
\end{align*}
we can choose $\ep_1=0$ and $c_1(\ep_1)=n^{-1}$ in the estimate \eqref{J2est3}. 
In view of \eqref{J2est3} we obtain
\begin{align}\label{J2est4}
J_2 &=\xi\gamma n\int_0^{s_0}s^{-b}(s_0-s)(nU_s(s,t)+1)^{q-2}U(s,t)U_s(s,t)\,ds\notag\\
&\le \ep_1\xi\gamma n\int_0^{s_0}s^{-b}(s_0-s)(nU_s(s,t)+1)^{p-2}U(s,t)U_s(s,t)\,ds\notag\\
&\quad\,+c_1(\ep_1)\int_0^{s_0}s^{-b}(s_0-s)U(s,t)\,ds \notag\\
&=\ep_1\xi\gamma n\int_0^{s_0}s^{-b}(s_0-s)(nU_s(s,t)+1)^{p-2}U(s,t)U_s(s,t)\,ds+c_1(\ep_1)\phi(s_0,t).
\end{align}
Combining \eqref{J2est4} with \eqref{phiDI2} and noting that $J_5 \ge 0$, we establish
\begin{align*}
\frac{\pa \phi}{\pa t}(s_0, t) &\ge  
(\chi\alpha-\ep_1\xi\gamma) n\int_0^{s_0}s^{-b}(s_0-s)(nU_s(s,t)+1)^{p-2}U(s,t)U_s(s,t)\,ds \notag\\
&\quad\,+n^2\int_0^{s_0}s^{2-\frac2n-b}(s_0-s)(nU_s(s,t)+1)^{m-1}U_{ss}(s,t)\,ds \notag\\
&\quad\,-\chi\beta n\int_0^{s_0}s^{-b}(s_0-s)(nU_s(s,t)+1)^{p-2}V(s,t)U_s(s,t)\,ds \notag\\
&\quad\,-n^{\kappa-1}\mu_1\int_0^{s_0}s^{-b}(s_0-s)\Big[\int_0^{s_0}\eta^{\frac an}U_s^\kappa(\eta,t)\,d\eta\Big]\,ds\notag\\
&\quad\,-c_1(\ep_1)\phi(s_0,t)
\end{align*}
for all $s_0 \in (0, R^n)$ and all $t \in (0, \min\{T, \tmax\})$. 
Here, choosing $\ep_1:=\frac{\chi\alpha}{2\xi\gamma}$ when $q>1$ and recalling that 
$\ep_1=0$ when $q \le1$, we see that $\chi\alpha-\ep_1\xi\gamma>0$,
which means that the desired inequality 
\eqref{phiDI} holds. 
\end{proof}

\begin{prth4.1} 
Let $\sigma:=\frac{n(n-1)}{(m-p+1)n+1}+\ep$ with some $\ep>0$ (fixed later) 
and let $u_0(x) \le L|x|^{-\sigma}$ 
for all $x \in \Omega$. 
Then Lemma~\ref{lem_profile} implies that 
\eqref{profile2} holds: 
$u(x,t) \le C|x|^{-\sigma}$ for all $x \in \Omega$ and all $t \in (0,T)$ with some $C>0$. 
Also, to estimate the first four terms on 
the right-hand side of \eqref{phiDI} 
we follow the steps in \cite[Lemmas~3.3,~3.6,~3.7~and~3.9]{T-2021}. 
Employing those estimates in our case, 
we have that there exist $c_1, c_2, c_3>0$ such that
\begin{align*}
\frac{\pa \phi}{\pa t}(s_0, t) 
&\ge c_1\psi_p(s_0,t)\notag\\
&\quad\, -c_2s_0^{\frac{3-b}{2}-\frac2n-\frac{\sigma}{2n}[2(m-1)_++(2-p)_+]}\sqrt{\psi_p(s_0,t)}-c_2s_0^{3-\frac2n-b}\notag\\
&\quad\,-c_2s_0^{\frac2n+\frac{1-b}{2}-\frac{\sigma}{2n}[(2-p)_++2(p-2)_+]}\sqrt{\psi_p(s_0,t)}-c_2s_0^{\frac2n-\frac{\sigma}{n}[(2-p)_++(p-2)_+]}\psi_p(s_0,t)\notag\\
&\quad\, -c_2s_0^{\frac{3-b}{2}+\frac an-\frac{\sigma}{2n}[2(\kappa-1)+(2-p)_+]}\sqrt{\psi_p(s_0,t)}\notag\\
&\quad\,-c_3\phi(s_0,t)
\end{align*}
for all $s_0 \in (0, R^n)$ and all $t \in (0, \min\{T, \tmax\})$, 
where 
\begin{align*}
\psi_p(s_0,t):=\int_0^{s_0}s^{-b+\frac{\sigma}{n}(2-p)_+}(s_0-s)U(s,t)U_s(s,t)\,ds
\end{align*}
for $s_0 \in (0, R^n)$ and $t \in [0, \tmax)$. 
We take $\ep_1>0$ which will be fixed later. 
Using the Young inequality, we can see that
there exists $c_4(\ep_1)>0$ such that
\begin{align}\label{phiDI**}
\frac{\pa \phi}{\pa t}(s_0, t) 
&\ge c_1\psi_p(s_0,t)-\ep_1\psi_p(s_0, t)
-c_2s_0^{\frac2n-\frac{\sigma}{n}[(2-p)_++(p-2)_+]}\psi_p(s_0,t)\notag\\
&\quad\, -c_4(\ep_1)\Big(s_0^{3-b-\frac4n-\frac \sigma n[2(m-1)_++(2-p)_+]}+s_0^{2-\frac2n-b}\notag\\
&\qquad\qquad\quad\ +s_0^{\frac4n+1-b-\frac\sigma n[(2-p)_++2(p-2)_+]}+s_0^{3-b+\frac{2a}{n}-\frac\sigma n[2(\kappa-1)+(2-p)_+]}\Big)\notag\\
&\quad\,-c_3\phi(s_0,t)
\end{align}
for all $s_0 \in (0, R^n)$ and all $t \in (0, \min\{T, \tmax\})$. 
We now pick $s_1 \in (0, R^n)$ small enough such that 
\begin{align*}
c_2s_0^{\frac2n-\frac{\sigma}{n}[(2-p)_++(p-2)_+]}\psi_p(s_0,t)
\le \frac{1}{4}c_1\psi_p(s_0,t)
\end{align*}
for all $s_0 \in (0, s_1)$ and all $t \in (0, \min\{T, \tmax\})$, 
and set $\ep_1:=\frac{c_1}{4}$. 
Then we have from \eqref{phiDI**} that 
\begin{align*}
\frac{\pa \phi}{\pa t}(s_0, t) 
&\ge \frac{1}{2}c_1\psi_p(s_0,t)\notag\\
&\quad\, -c_4\Big(s_0^{3-b-\frac4n-\frac \sigma n[2(m-1)_++(2-p)_+]}+s_0^{2-\frac2n-b}\notag\\
&\qquad\quad\ \ +s_0^{\frac4n+1-b-\frac\sigma n[(2-p)_++2(p-2)_+]}+s_0^{3-b+\frac{2a}{n}-\frac\sigma n[2(\kappa-1)+(2-p)_+]}\Big)\notag\\
&\quad\,
-c_3\phi(s_0,t)
\end{align*}
for all $s_0 \in (0, s_1)$ and all $t \in (0, \min\{T, \tmax\})$. 
By an argument similar to that in the proof of \cite[Lemma~4.3]{T-2021}, 
thanks to the conditions \ref{C1}--\ref{C3}, 
we can pick $\ep_0>0$ and then 
for $\sigma=\frac{n(n-1)}{(m-p+1)n+1}+\ep_0$
there exist $c_5, c_6>0$ and $\theta \in (0,\ 2-\frac{\sigma}{n}(2-p)_+)$
such that
\begin{align}\label{phiDI***}
\frac{\pa \phi}{\pa t}(s_0, t) \ge \frac{1}{2}c_1\psi_p(s_0,t)
-c_5s_0^{3-b-\theta}-c_6\phi(s_0,t).
\end{align}
Applying the estimate
$\sqrt{\psi_p(s_0,t)} \ge c_7s_0^{\frac{b-3}{2}+\frac{\sigma}{2n}(2-p)_+}\phi(s_0,t)$
with some $c_7>0$ 
(see \cite[Lemma~3.10]{T-2021}) 
to the first term on the right-hand side of 
\eqref{phiDI***}, we have
\begin{align}\label{phiDIa}
\frac{\pa \phi}{\pa t}(s_0, t) \ge c_7s_0^{b-3+\frac \sigma n (2-p)_+}\phi^2(s_0,t)
-c_5s_0^{3-b-\theta}-c_6\phi(s_0,t)
\end{align}
for all $s_0 \in (0, s_1)$ and all $t \in (0, \min\{T, \tmax\})$. 
Again by Young's inequality, we derive that 
there exists $c_8>0$ such that
\begin{align}\label{phiDIb}
c_6\phi(s_0,t) 
\le \frac{1}{2}c_7s_0^{b-3+\frac \sigma n (2-p)_+}\phi^2(s_0,t)
+c_8s_0^{3-b-\frac \sigma n (2-p)_+}.
\end{align}
A combination of \eqref{phiDIa} and \eqref{phiDIb} 
yields 
\begin{align}\label{DIgoal}
\frac{\pa \phi}{\pa t}(s_0, t) &\ge \frac{1}{2}c_7s_0^{b-3+\frac \sigma n (2-p)_+}\phi^2(s_0,t)
-c_5s_0^{3-b-\theta}-c_8s_0^{3-b-\frac \sigma n (2-p)_+}\notag\\
&\ge \frac{1}{2}c_7s_0^{b-3+\frac \sigma n (2-p)_+}\phi^2(s_0,t)
-c_9s_0^{\widetilde{\theta}}
\end{align}
for all $s_0 \in (0, s_1)$ and all $t \in (0, \min\{T, \tmax\})$ 
with some $c_9>0$ and
$\widetilde{\theta}=\min\{3-b-\theta,\ 3-b-\frac \sigma n (2-p)_+\}$. 
Here, by the conditions \ref{C1}--\ref{C3},  we can take $b \in (0, 1)$ satisfying
\begin{align*}
b<2-\frac4n-\frac \sigma n[2(m-1)_++(2-p)_+]
\end{align*}
(see \cite[Lemma~4.1]{T-2021}). 
This yields that 
\begin{align*}
b-3+\frac \sigma n (2-p)_+
&<\Big\{2-\frac4n-\frac \sigma n[2(m-1)_++(2-p)_+]\Big\}-3+\frac \sigma n (2-p)_+\notag\\
&=-1-\frac 4n-\frac{2\sigma}{n}(m-1)_+<0\\
\intertext{and moreover, recalling the choice that 
$\theta \in (0,\ 2-\frac{\sigma}{n}(2-p)_+)$, we have}
3-b-\theta
&>3-b-\Big[2-\frac{\sigma}{n}(2-p)_+\Big]\notag\\
&=1-b+\frac{\sigma}{n}(2-p)_+>0,
\end{align*}
which lead to $\widetilde{\theta}>0$. 
Taking into account 
the proof of \cite[Theorem~1.1]{W-2018} or 
\cite[Theorem~1.1]{BFL-2021}, 
we obtain $\tmax<T<\infty$, 
which implies that  Theorem~\ref{thm_bu1} holds 
by virtue of the criterion \eqref{bc}.  \qed
\end{prth4.1}


\subsection{The case \boldmath{$p=q$}}
\label{Subsec4.2}

In this subsection we state the following theorem 
which guarantees finite-time blow-up
in \eqref{prob} in the case $p=q$.

\begin{thm}\label{thm_bu2}
Assume that $p=q$ and $\chi\alpha-\xi\gamma>0$. 
Moreover, suppose that $m$, $p$ and $\kappa$ fulfill 
the same conditions as in 
Theorem~{\rm \ref{thm_bu1}}. 
Let $M_0>0$, $M_1 \in (0, M_0)$ and $L>0$.  Then the conclusion of Theorem~{\rm \ref{thm_bu1}} holds.
\end{thm}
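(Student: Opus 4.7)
The plan is to follow the same blueprint as in the proof of Theorem~\ref{thm_bu1}, changing only the step where the balance between the attraction and repulsion terms enters. The overall strategy is: (i) derive a radial ODI of the form \eqref{phiDI} for the moment functional $\phi$ defined in \eqref{phi}; (ii) use Lemma~\ref{lem_profile} to obtain the pointwise upper bound $u(x,t)\le C|x|^{-\sigma}$ with $\sigma=\frac{n(n-1)}{(m-p+1)n+1}+\ep_0$; (iii) invoke the estimates \cite[Lemmas~3.3, 3.6, 3.7, 3.9]{T-2021} to bound the diffusive, cross-diffusive and logistic contributions; (iv) localize to $s_0\in(0,s_1)$ to absorb the small-$s_0$ error terms and reach a super-linear ODI of the form \eqref{DIgoal}, from which $T_{\max}<\infty$ follows through the criterion \eqref{bc}.

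The only essential difference with respect to Theorem~\ref{thm_bu1} occurs in the analogue of Lemma~\ref{lem_DI_p>q}. Repeating the computation leading to \eqref{phiDI2}, in the present case $p=q$ the first two terms on the right-hand side combine directly, with no use of Young's inequality, as
\begin{align*}
J_1-J_2
=(\chi\alpha-\xi\gamma)\,n\int_0^{s_0}s^{-b}(s_0-s)(nU_s(s,t)+1)^{p-2}U(s,t)U_s(s,t)\,ds.
\end{align*}
Since by assumption $\chi\alpha-\xi\gamma>0$, this yields a positive leading term of exactly the same structure as the first term on the right-hand side of \eqref{phiDI}, with $C_1:=(\chi\alpha-\xi\gamma)n>0$. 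The terms $J_3$, $J_4$, $J_5$ (where $J_5\ge0$ is discarded) and $J_6$ are handled verbatim as in the proof of Lemma~\ref{lem_DI_p>q}, and there is no longer any need for the splitting argument based on $\frac{p-1}{q-1}>1$. So the analogue of \eqref{phiDI} holds in the present case as well, with the same structure and with a constant depending on $\chi\alpha-\xi\gamma$.

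With this inequality in hand, the rest of the argument is identical to the one in the proof of Theorem~\ref{thm_bu1}: since Lemma~\ref{lem_profile} was already stated for $p\ge q$, it applies here and produces the needed pointwise profile estimate; the bounds from \cite[Lemmas~3.3, 3.6, 3.7, 3.9]{T-2021} are unaffected by whether $p>q$ or $p=q$; the algebraic conditions \ref{C1}--\ref{C3} give exactly the same room to choose $\ep_0>0$, $b\in(0,1)$ and $\theta\in(0,2-\frac{\sigma}{n}(2-p)_+)$, and we reach the super-linear inequality
\begin{align*}
\frac{\partial\phi}{\partial t}(s_0,t)\ge \tfrac12 c_7 s_0^{b-3+\frac{\sigma}{n}(2-p)_+}\phi^2(s_0,t)-c_9 s_0^{\widetilde\theta}
\end{align*}
on $(0,s_1)\times(0,\min\{T,T_{\max}\})$ with $\widetilde\theta>0$. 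Choosing initial data with $\int_\Omega u_0=M_0$ and $\int_{B_{r_1}(0)}u_0\ge M_1$ so that $\phi(s_0,0)$ dominates the $s_0^{\widetilde\theta}$ error at some small $s_0$, the usual ODE comparison (as in \cite[Theorem~1.1]{W-2018} or \cite[Theorem~1.1]{BFL-2021}) gives $T_{\max}<T<\infty$, and \eqref{bc} yields blow-up in $L^\infty$.

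The main (and only) conceptual obstacle is confirming that the sign condition $\chi\alpha-\xi\gamma>0$ really plays the role formerly played by $p>q$; that is, that the ``cancellation'' in $J_1-J_2$ on the nose (without any auxiliary use of Young's inequality and the resulting $\ep_1$-loss) still leaves a positive coefficient of the right size to drive the same localization argument. Once this is verified, the conditions \ref{C1}--\ref{C3} and all the algebraic exponent bookkeeping can be imported unchanged, so no new delicate balance of $m$, $p$, $\kappa$ needs to be arranged.
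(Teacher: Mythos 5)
Your proposal is correct and follows essentially the same route as the paper: the paper's Lemma~\ref{lem_DI_p=q} likewise combines $J_1-J_2$ directly into a single term with coefficient $(\chi\alpha-\xi\gamma)n>0$ (no Young-type splitting needed when $p=q$), and the proof of Theorem~\ref{thm_bu2} then imports the remainder of the argument from Theorem~\ref{thm_bu1} verbatim, exactly as you describe.
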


In order to prove the above theorem we show the following lemma giving the pointwise lower estimate for $\frac{\pa \phi}{\pa t}$, 
where $U, V, W$ and $\phi$ are defined 
as in \eqref{Udef}--\eqref{Wdef} and \eqref{phi}, 
respectively. 

\begin{lem}\label{lem_DI_p=q}
Suppose that $p=q$. 
Let $\mu_1>0$, $\kappa \ge 1$, 
$a \ge 0$ and $T>0$. 
Then there exist $C_1, C_2>0$ such that
\begin{align}\label{phiDI6}
\frac{\pa \phi}{\pa t}(s_0, t) \ge &\ 
(\chi\alpha-\xi\gamma) n\int_0^{s_0}s^{-b}(s_0-s)(nU_s(s,t)+1)^{p-2}U(s,t)U_s(s,t)\,ds \notag\\
&+n^2\int_0^{s_0}s^{2-\frac2n-b}(s_0-s)(nU_s(s,t)+1)^{m-1}U_{ss}(s,t)\,ds \notag\\
&-\chi\beta n\int_0^{s_0}s^{-b}(s_0-s)(nU_s(s,t)+1)^{p-2}V(s,t)U_s(s,t)\,ds \notag\\
&-n^{\kappa-1}\mu_1\int_0^{s_0}s^{-b}(s_0-s)\Big[\int_0^{s_0}\eta^{\frac an}U_s^\kappa(\eta,t)\,d\eta\Big]\,ds
\end{align}
for all $s_0 \in (0, R^n)$ and all $t \in (0, \min\{T, \tmax\})$. 
\end{lem}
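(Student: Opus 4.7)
The plan is to proceed along the same lines as the proof of Lemma~\ref{lem_DI_p>q}, since the decomposition \eqref{phiDI2} of $\frac{\partial \phi}{\partial t}$ into six pieces $J_1 - J_2 + J_3 - J_4 + J_5 - J_6$ is derived with no hypothesis on the relation between $p$ and $q$. Concretely, I would first rewrite the first equation of \eqref{prob} in radial form as in \eqref{ueq}, then use \eqref{Veq} and \eqref{Weq} (which come from the elliptic equations for $v$ and $w$) to obtain the pointwise inequality \eqref{UDI} for $U_t$, exploiting the nonnegativity of $\lambda(|x|)$ and the bound $\mu(r)\le \mu_1 r^a$ from \eqref{mupro}. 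Multiplying by $s^{-b}(s_0-s)$ and integrating in $s$ over $(0,s_0)$ yields \eqref{phiDI2} with the same six integrals $J_1,\dots,J_6$.

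The only structural difference when $p=q$ is that the attraction and repulsion contributions $J_1$ and $J_2$ involve the \emph{identical} nonlinear factor $(nU_s+1)^{p-2}UU_s$, and therefore combine directly without any recourse to Young's inequality:
\begin{align*}
J_1 - J_2 = (\chi\alpha-\xi\gamma)\, n \int_0^{s_0} s^{-b}(s_0-s)(nU_s(s,t)+1)^{p-2} U(s,t)U_s(s,t)\,ds.
\end{align*}
No interpolation is needed, so no $-C_2\phi$ correction term appears — this is the reason the conclusion \eqref{phiDI6} has a cleaner form than \eqref{phiDI} in the previous subsection.

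To finish, I would discard the term $J_5$, which is manifestly nonnegative because $U_s \ge 0$ (from $u \ge 0$) and $W \ge 0$ (from the maximum principle applied to the third equation of \eqref{prob}, noting $u \ge 0$ and $\delta,\gamma>0$), while $(nU_s+1)^{q-2}$ and $s^{-b}(s_0-s)$ are nonnegative on $(0,s_0)$. The remaining terms $J_3$, $-J_4$, $-J_6$ are exactly the last three integrals in \eqref{phiDI6}, and no further manipulation is required.

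I do not foresee any real obstacle here: the work was already absorbed in carrying out the radial reduction leading to \eqref{phiDI2}, and the assumption $p=q$ is used only in the elementary algebraic identification $J_1-J_2 = (\chi\alpha-\xi\gamma)n \int \cdots$. The sign condition $\chi\alpha-\xi\gamma>0$ of Theorem~\ref{thm_bu2} is \emph{not} needed for the lemma itself — it will only be invoked afterwards, when this lower bound is fed into the blow-up machinery (the superlinear ODE argument of \cite[Theorem~1.1]{W-2018}/\cite[Theorem~1.1]{BFL-2021}) in complete parallel to the proof of Theorem~\ref{thm_bu1}.
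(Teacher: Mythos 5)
Your proposal is correct and follows essentially the same route as the paper: the authors likewise derive \eqref{phiDI2} with $q=p$, combine $J_1-J_2$ into the single $(\chi\alpha-\xi\gamma)$ term since the integrands coincide, and discard the nonnegative term $J_5$, with no Young-type interpolation and hence no $-C_2\phi$ correction. Your additional remark that the sign condition $\chi\alpha-\xi\gamma>0$ is only needed later, in the proof of Theorem~\ref{thm_bu2}, is also consistent with the paper.
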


\begin{proof}
Arguing as in Lemma~\ref{lem_DI_p>q}, we have \eqref{phiDI2}
with $q=p$. 
We then rearrange it as 
\begin{align*}
\frac{\pa \phi}{\pa t}(s_0, t) \ge &\ 
(\chi\alpha-\xi\gamma) n\int_0^{s_0}s^{-b}(s_0-s)(nU_s(s,t)+1)^{p-2}U(s,t)U_s(s,t)\,ds \notag\\
&+n^2\int_0^{s_0}s^{2-\frac2n-b}(s_0-s)(nU_s(s,t)+1)^{m-1}U_{ss}(s,t)\,ds \notag\\
&-\chi\beta n\int_0^{s_0}s^{-b}(s_0-s)(nU_s(s,t)+1)^{p-2}V(s,t)U_s(s,t)\,ds \notag\\
&-n^{\kappa-1}\mu_1\int_0^{s_0}s^{-b}(s_0-s)\Big[\int_0^{s_0}\eta^{\frac an}U_s^\kappa(\eta,t)\,d\eta\Big]\,ds
\end{align*}
for all $s_0 \in (0, R^n)$ and all $t \in (0, \min\{T, \tmax\})$, which means that \eqref{phiDI6} holds. 
\end{proof}

\begin{prth4.2}
In view of Lemma~\ref{lem_DI_p=q}, proceeding similarly in 
the proof of Theorem~\ref{thm_bu1} and 
taking $\sigma$ properly, we can find $c_1, c_2>0$ 
and $\theta \in (0,\ 2-\frac\sigma n(2-p)_+)$ 
such that 
\begin{align*}
\frac{\pa \phi}{\pa t}(s_0, t) \ge c_1s_0^{b-3+\frac \sigma n (2-p)_+}\phi^2(s_0,t)
-c_2s_0^{3-b-\theta}
\end{align*}
for all $s_0 \in (0, s_1)$ and all $t \in (0, \min\{T, \tmax\})$ for some small $s_1>0$.  
This inequality corresponds to \eqref{DIgoal} 
and proves Theorem~\ref{thm_bu2}. 
 \qed
\end{prth4.2}


\end{document}